\newcommand*{\defeq}{\mathrel{\rlap{%
                     \raisebox{0.3ex}{$\m@th\cdot$}}%
                     \raisebox{-0.3ex}{$\m@th\cdot$}}%
                     =}
\newcommand*{\eqdef}{=
										 \mathrel{\rlap{%
                     \raisebox{0.3ex}{$\m@th\cdot$}}%
                     \raisebox{-0.3ex}{$\m@th\cdot$}}%
										}
\newcommand{\unit}{\mathbbm 1} 
\newcommand*\rfrac[2]{{}^{#1}\!/_{#2}}
\newcommand{\mbs}{\mathbb{S}}
\newcommand{\mbr}{\mathbb{R}}
\newcommand{\mcd}{\mathcal{D}}
\newcommand{\mch}{\mathcal{H}}
\newcommand{\ep}{\epsilon}
\newcommand{\lb}{\lambda}
\newcommand{\mfp}{\mathfrak{P}}
\newcommand{\nbmosd}{BMO^2_{\mcd}(\nu)}
\newcommand{\La}{\left\langle }
\newcommand{\Ra}{\right\rangle }
\numberwithin{equation}{section}
\newtheorem{thm}{Theorem}[section]
\newtheorem{lm}[thm]{Lemma}
\newtheorem{prop}[thm]{Proposition}
\newtheorem*{prop*}{Proposition}
\newtheorem{defn}[thm]{Definition}
\theoremstyle{remark}
\newtheorem*{rem*}{Remark}
\begin{document}

\title[Two Weight Inequalities for Iterated Commutators with CZOs]{Two Weight Inequalities for Iterated Commutators with Calder{\'o}n-Zygmund Operators}

\author{Irina Holmes}
\address{Irina Holmes, School of Mathematics\\ Georgia Institute of Technology\\ 686 Cherry Street\\ Atlanta, GA USA 30332-0160}
\email{irina.holmes@math.gatech.edu}

\author[Brett D. Wick]{Brett D. Wick$^{\ddagger}$}
\address{Brett D. Wick, Department of Mathematics\\ Washington University - St. Louis\\ One Brookings Drive\\
St. Louis, MO 63130-4899 USA}
\email{wick@math.wustl.edu}
\thanks{$\ddagger$  Research supported in part by a National Science Foundation DMS grants \#0955432 and \#1560955.}

\subjclass[2000]{Primary: 42, 42A, 42B, 42B20, 42B25, 42A50, 42A40, }
\keywords{Commutators, Calder\'on--Zygmund Operators, Bounded Mean Oscillation, Weights}

\begin{abstract}
Given a Calder{\'o}n-Zygmund operator  $T$, a classic result of Coifman-Rochberg-Weiss relates the norm of the commutator $[b, T]$ with the BMO norm of $b$. We focus on a weighted version of this result, obtained  by Bloom and later generalized by Lacey and the authors, which relates $\| [b, T] : L^p(\mathbb{R}^n;\mu) \to L^p(\mathbb{R}^n;\lambda)\|$ to the norm of $b$ in a certain weighted BMO space determined by $A_p$ weights $\mu$ and $\lambda$. We extend this result to higher iterates of the commutator and recover a one-weight result of Chung-Pereyra-Perez in the process. 
\end{abstract}

\maketitle
\setcounter{tocdepth}{1}
\tableofcontents


\section{Introduction and Statement of Main Results}

A Calder\'on--Zygmund operator associated to a kernel $K(x,y)$ is an integral operator:
$$
\mathbf{T}f(x):=\int_{\mathbb{R}^n} K(x,y)f(y)\,dy,\quad x\notin\textnormal{supp} f,
$$
where the kernel satisfies the standard size and smoothness estimates
\begin{gather*}
\left\vert K(x,y)\right\vert  \leq  \frac{C}{\left\vert x-y\right\vert^n}, \\
\left\vert K(x+h,y)-K(x,y)\right\vert +\left\vert K(x,y+h)-K(x,y)\right\vert  \leq  C\frac{\left\vert h\right\vert^{\delta}}{\left\vert x-y\right\vert^{n+\delta}},
\end{gather*}
for all $\left\vert x-y\right\vert>2\left\vert h\right\vert>0$ and a fixed $\delta\in (0,1]$. The prototypes for this important class of operators are the Hilbert transform, in the one-dimensional case, and the Riesz transforms, in the multidimensional case.

Recall that the commutator $[S, T]$ of two operators $S$ and $T$ is defined as
	$[S, T] \defeq ST - TS.$
We are interested in commutators of multiplication by a symbol $b$ with Calder{\'o}n-Zygmund operators $\mathbf{T}$, denoted $[b, \mathbf{T}]$ and defined as:
	$$[b, \mathbf{T}]f \defeq b\mathbf{T}f - \mathbf{T}(bf).$$
In the foundational paper \cite{CRW} Coifman, Rochberg, and Weiss provided a connection between the norm of the commutator $[b,\mathbf{T}]:L^p(\mathbb{R}^n)\to L^p(\mathbb{R}^n)$ and the norm of the function $b$ in $BMO$.  This result was later extended to the case when the commutator acts between two different weighted Lebesgue spaces $L^p(\lambda):=L^p(\mathbb{R}^n;\lambda)$ and $L^p(\mu):=L^p(\mathbb{R}^n;\mu)$. In 1985, Bloom \cite{Bloom} showed that, if $\mu$ and $\lb$ are $A_p$ weights, then $\| [b, H]: L^p(\mu) \to L^p(\lb) \|$ is equivalent to $\|b\|_{BMO(\nu)}$, where $H$ is the Hilbert transform and $BMO(\nu)$ is the weighted BMO space associated with the weight $\nu = \mu^{1/p}\lb^{-1/p}$.  Here, 	$$\|b\|_{BMO(\nu)} \defeq \sup_Q \frac{1}{\nu(Q)} \int_Q |b - \La b\Ra_Q|\,dx < \infty,$$
where $\nu(Q) = \int_Q \,d\nu$, and the supremum is over all cubes $Q$.  When there is no weight involved, we will simply denote this space by $BMO$, which is the classical space of functions with bounded mean oscillation.

A new dyadic proof of Bloom's result was given in \cite{HLW1}. This was then generalized to all Calder{\'o}n-Zygmund operators in \cite{HLW2}, where one of the main results is:

\begin{thm} \label{T:CZOComm1}
Let $\mathbf{T}$ be a Calder{\'o}n-Zygmund operator on $\mbr^n$ and $\mu, \lb \in A_p$ with $1<p<\infty$. Suppose $b \in BMO(\nu)$, where $\nu = \mu^{\frac{1}{p}} \lb^{-\frac{1}{p}}$. Then
	\begin{equation*} 
	\| [b, \mathbf{T}]: L^p(\mu) \to L^p(\lb) \| \leq c \|b\|_{BMO(\nu)},
	\end{equation*}
where $c$ is a constant depending on the dimension $n$, the operator $\mathbf{T}$, and $\mu$, $\lb$, and $p$.
\end{thm}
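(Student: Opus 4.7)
The plan is to follow the dyadic representation strategy pioneered in \cite{HLW1} and carry out the reduction already used in the scalar Bloom case. First, I would invoke Hyt\"onen's dyadic representation theorem, which expresses a general Calder\'on--Zygmund operator $\mathbf{T}$ as an expectation over random dyadic grids $\mcd$ of sums of dyadic shift operators $S^{m,k}_\mcd$, with coefficients decaying like $2^{-\delta\max(m,k)/2}$. Since the commutator is linear in $\mathbf{T}$, it suffices to bound each $[b, S^{m,k}_\mcd]$ uniformly on $L^p(\mu) \to L^p(\lb)$ by $C(m+k+1)^\alpha \|b\|_{BMO(\nu)}$, for some fixed $\alpha$, so that the dyadic decay in the representation absorbs the polynomial growth.

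Second, for a fixed grid $\mcd$, I would expand the pointwise product $bf$ using the martingale paraproduct decomposition
\begin{equation*}
bf = \pi_b f + \pi_b^* f + \Gamma_b f,
\end{equation*}
where $\pi_b f = \sum_Q \langle b, h_Q\rangle \langle f\rangle_Q h_Q$ is the dyadic paraproduct, $\pi_b^*$ is its adjoint, and $\Gamma_b$ is the diagonal (remainder) piece. Plugging this into $[b, S^{m,k}_\mcd]f = b\,S^{m,k}_\mcd f - S^{m,k}_\mcd (bf)$ produces a finite list of operators of the form $\pi_b \circ S^{m,k}_\mcd$, $S^{m,k}_\mcd \circ \pi_b$, and analogous terms with $\pi_b^*$ and $\Gamma_b$. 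The key structural fact here, already isolated in \cite{HLW1, HLW2}, is that each of these can be written as a dyadic shift (of possibly larger but still controlled complexity) with Haar coefficients multiplied by matrix coefficients of $b$ over nearby cubes.

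Third, I would reduce the entire estimate to the boundedness of the dyadic paraproduct between Bloom weighted spaces: namely, that for $\mu,\lambda\in A_p$ and $\nu = \mu^{1/p}\lambda^{-1/p}$,
\begin{equation*}
\|\pi_b : L^p(\mu) \to L^p(\lb)\| \lesssim \|b\|_{BMO_\mcd(\nu)},
\end{equation*}
together with the one-weight bound $\|S^{m,k}_\mcd : L^p(\omega) \to L^p(\omega)\| \lesssim (m+k+1)[\omega]_{A_p}^{\max(1, 1/(p-1))}$ for $\omega \in \{\mu, \lb\}$. The paraproduct estimate is the analytic heart of the argument: it is proved by a standard duality/stopping-time argument combined with the weighted Carleson embedding theorem, where the Bloom weight $\nu$ enters precisely because the natural Carleson measure controlling $\pi_b$ is $\sum_Q |\langle b, h_Q\rangle|^2 \unit_Q/|Q|$, whose Carleson norm with respect to $\mu$-testing and $\lb$-range is bounded by $\|b\|_{BMO_\mcd(\nu)}^2$ up to an $A_p$ constant. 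Once the paraproduct bound is in hand, composing with the shift bound and summing over $m,k$ via Hyt\"onen's theorem closes the argument.

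The main technical obstacle is the paraproduct step in the two-weight Bloom setting: one must verify that the testing condition defining $BMO_\mcd(\nu)$ is equivalent to the dyadic-$H^1$-$BMO$ duality pairing against atoms weighted by $\nu$, and that this pairing interacts correctly with the $\mu$- and $\lb$-weighted Carleson embeddings. This is handled by the weighted $H^1$--$BMO$ duality and the John--Nirenberg inequality adapted to the weight $\nu$ (which is an $A_2$ weight since $\mu,\lb \in A_p$). Once this is established, the uniform control of $[b, S^{m,k}_\mcd]$ and the Hyt\"onen averaging conclude the theorem, with the final constant depending on $n$, $\delta$, the $A_p$ constants of $\mu, \lb$, and the CZO constants of $\mathbf{T}$.
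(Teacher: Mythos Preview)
Your overall architecture (Hyt\"onen representation, paraproduct decomposition of $bf$, two-weight paraproduct bounds plus one-weight shift bounds) matches the paper's, but there is a genuine gap in your second and third steps. The product decomposition you write, $bf = \pi_b f + \pi_b^* f + \Gamma_b f$, is incomplete: in the paper's notation one has $bf = \Pi_b f + \Pi_b^* f + \Gamma_b f + \Pi_f b$, and the extra term $\Pi_f b$ (the paraproduct with $f$ as symbol acting on $b$) cannot be absorbed into a ``diagonal remainder'' operator with fixed symbol $b$. When you feed this into the commutator you get, in addition to the compositions $P_b \mbs^{ij}$ and $\mbs^{ij} P_b$ that you list, the remainder
\[
\Theta_b(\mbs^{ij}) f \;=\; \Pi_{\mbs^{ij} f}\, b \;-\; \mbs^{ij}\Pi_f b,
\]
which is \emph{not} of the form (fixed two-weight paraproduct)$\,\circ\,$(shift) or (shift)$\,\circ\,$(fixed two-weight paraproduct), because the paraproduct symbol changes with $f$. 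Consequently your reduction in the third paragraph---``reduce the entire estimate to the boundedness of the dyadic paraproduct between Bloom weighted spaces together with the one-weight shift bound''---does not close: those two ingredients handle $[\mfp_b,\mbs^{ij}]$ but leave $\Theta_b(\mbs^{ij})$ untouched.

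The paper treats $\Theta_b(\mbs^{ij})$ separately, and this is where most of the analytic work lies. For cancellative shifts with $(i,j)\neq(0,0)$ one computes explicitly
\[
\Theta_b(\mbs^{ij}) f \;=\; \sum_{R}\sum_{P\in R_{(i)},\,Q\in R_{(j)}} a^{\ep\eta}_{PQR}\,\widehat{f}(P,\ep)\,\big(\langle b\rangle_Q-\langle b\rangle_P\big)\,h_Q^\eta,
\]
splits this over the chain of intermediate cubes between $P$, $R$, $Q$ into operators $A_l$, $B_m$, and bounds each via the weighted $\mch^1$--$BMO$ duality \eqref{E:NuH1BMODuality} combined with the maximal function and the \emph{shifted} dyadic square function $\widetilde{S_\mcd}^{i,j}$; this is where the $BMO(\nu)$ norm of $b$ actually enters for the remainder. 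For the non-cancellative $(0,0)$ shift one must further decompose $\Theta_b(\Pi_a)$ and $\Theta_b(\Pi_a^*)$ using the auxiliary bilinear operators $\Lambda_{a,b}$ and $\widetilde\Lambda_{a,b}$ (Lemma~\ref{L:Lambda_ab}). Your sketch does not provide any mechanism for this remainder term; the phrase ``can be written as a dyadic shift of controlled complexity with Haar coefficients multiplied by matrix coefficients of $b$'' is accurate as a description of the formula above, but the resulting coefficients $(\langle b\rangle_Q-\langle b\rangle_P)$ are not uniformly bounded, so this object is not a shift to which the one-weight bound applies, and the two-weight estimate requires the additional $\mch^1$--$BMO$ and square-function machinery.
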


A natural extension of this is to consider higher iterates of this commutator.  To see how these arise naturally, we follow an argument of Coifman, Rochberg, and Weiss, \cite{CRW}.  For $b\in BMO$ and $r$ sufficiently small, consider the operator:
$$
S_r(f)=e^{rb}\mathbf{T}(e^{-rb}f)
$$
Then it is easy to see that $\left.\frac{d}{dr}S_r(f)\right\vert_{r=0}=[b,\mathbf{T}](f)$ and similarly that we have $\left.\frac{d^n}{dr^n}S_r(f)\right\vert_{r=0}=[b, \ldots, \big[b, [b, \mathbf{T}]\big] \ldots]$ with the function $b$ appearing $n$ times.  For some Calder\'on--Zygmund operator $\mathbf{T}$, let $C_b^1(\mathbf{T}) \defeq [b, \mathbf{T}]$, and
	$$C_b^k(\mathbf{T}) \defeq [b, C_b^{k-1}(\mathbf{T})] \text{, for all integers } k \geq 1.$$
Using weighted theory and the connection between the space $BMO$ and $A_2$ weights, it is then easy to see that the norm of the operator $C_b^k(\mathbf{T})$ on $L^2(\mathbb{R}^n)$ depends on the number of iterates and the norm of the function $b\in BMO$.  At this point, a few natural questions arise:  (1) What is the norm of the $k$th iterate as a function of the norm of $b\in BMO$?  (2) What happens if we attempt to compute the norm of this operator when it acts on $L^p(\mathbb{R}^n;w)$ for a weight $w\in A_p$?  (3) Is there an extension of Theorem \ref{T:CZOComm1} for the iterates?

In the paper \cite{ChungPereyraPerez} Chung, Pereyra, and Perez provide answers to questions (1) and (2) and show that:
$$
\left\| C_b^k(\mathbf{T}):L^2(w)\to L^2(w)\right\|  \leq c \|b\|_{BMO}^k [w]_{A_2}^{k+1},
$$
where $c$ is a constant depending on $n$, $k$ and $\mathbf{T}$.  In fact, they show that, more generally, if $\mathbf{T}$ is any operator bounded on $L^2(\mathbb{R}^n;w)$ with norm $\varphi([w]_{A_2})$ then 
$$
\left\Vert C_b^k(\mathbf{T}):L^2(w)\to L^2(w)\right\Vert\leq \varphi([w]_{A_2})[w]_{A_2}^{k}\left\Vert b\right\Vert_{BMO}^{k}.
$$
However, the two weight extension of Theorem \ref{T:CZOComm1} lies outside the scope of the results in \cite{ChungPereyraPerez}.  Additionally, in \cite[pg. 1166]{ChungPereyraPerez} they ask if it possible to provide a proof of the norm of the iterates of commutators with Calder\'on--Zygmund operators via the methods of dyadic analysis.  The main goal of this paper is to extend Theorem \ref{T:CZOComm1} to the case of iterates, addressing question (3), and in the process show how to answer the question raised in \cite{ChungPereyraPerez}.  This leads to the main result of the paper:

\begin{thm} \label{T:Main}
Let $\mathbf{T}$ be a Calder{\'o}n-Zygmund operator on $\mbr^n$ and $\mu, \lb \in A_p$ with $1<p<\infty$. Suppose $b \in BMO \cap BMO(\nu)$, where $\nu = \mu^{\frac{1}{p}} \lb^{-\frac{1}{p}}$. Then for all integers $k \geq 1$:
	$$\left\| C_b^k(\mathbf{T}) : L^p(\mu) \to L^p(\lb) \right\| \leq c \: \|b\|^{k-1}_{BMO} \|b\|_{BMO(\nu)},$$
where $c$ is a constant depending on $n$, $k$, $\mathbf{T}$, $\mu$, $\lb$, and $p$. 

In particular, if $\mu = \lb = w \in A_2$:
	$$\left\| C_b^k(\mathbf{T}) : L^2(w) \to L^2(w) \right\| \leq c \|b\|_{BMO}^k [w]_{A_2}^{k+1},$$
where $c$ is a constant depending on $n$, $k$ and $\mathbf{T}$.
\end{thm}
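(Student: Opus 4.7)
The plan is to argue by induction on $k$, with Theorem \ref{T:CZOComm1} supplying the base case $k = 1$. For the inductive step we would reduce to the dyadic model via Hyt\"onen's representation theorem, writing $\mathbf{T}$ as an average of dyadic shifts $S^{m,n}$ over random dyadic grids with polynomial-in-$(m,n)$ constants, so that it suffices to control $C_b^k(S)$ for each individual dyadic shift $S$.

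Fix such a shift $S$ and write $C_b^k(S) = [b, C_b^{k-1}(S)]$. The main tool would be the standard dyadic decomposition of pointwise multiplication,
$$ bg = \Pi_b g + \Pi_b^* g + \Pi_g^* b,$$
where $\Pi_b$ denotes the dyadic paraproduct with symbol $b$ and $\Pi_b^*$ is its adjoint. Applying this to $g = C_b^{k-1}(S)f$ and to $g = f$, the commutator splits as the sum of the ``paraproduct commutators''
$$ \Pi_b \bigl( C_b^{k-1}(S) f \bigr) - C_b^{k-1}(S) \bigl( \Pi_b f \bigr),$$
together with its $\Pi_b^*$-analogue and a remainder $R_k(f)$ arising from the $\Pi_g^* b$ pieces. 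The weighted paraproduct estimates from \cite{HLW1, HLW2} give $\Pi_b, \Pi_b^* : L^p(\mu) \to L^p(\lb)$ with operator norm $\lesssim \|b\|_{BMO(\nu)}$, so to estimate the compositions it is enough to know the one-weight bounds $\|C_b^{k-1}(S) : L^p(w) \to L^p(w)\| \lesssim \|b\|_{BMO}^{k-1}$ for $w \in \{\mu, \lb\}$. These would be established by a parallel induction implementing the Chung--Pereyra--Perez argument at the level of dyadic shifts, which simultaneously yields the $[w]_{A_2}^{k+1}$ dependence in the one-weight corollary: the sharp $A_2$ theorem for shifts contributes one factor of $[w]_{A_2}$ and each paraproduct step contributes another.

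The main obstacle is the remainder $R_k$ and, more broadly, the combinatorial bookkeeping produced when the decomposition is iterated: after fully expanding the $k$-fold commutator one ends up with a sum of multilinear paraproduct-type operators in $k$ copies of $b$, and the balance that must be maintained is that exactly one copy of $b$ is matched to $\|b\|_{BMO(\nu)}$ (via the outermost weighted paraproduct) while the remaining $k-1$ are absorbed by the unweighted one-weight estimate on $C_b^{k-1}(S)$. The remainder terms themselves should reduce, via Cauchy--Schwarz and weighted square function bounds together with the Carleson embedding for BMO symbols, to estimates of the same shape as the paraproduct terms, so that a careful accounting closes the induction at the promised rate $\|b\|_{BMO}^{k-1} \|b\|_{BMO(\nu)}$.
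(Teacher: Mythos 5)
Your high-level framework matches the paper's: pass to dyadic shifts via Hyt\"onen's representation theorem, decompose pointwise multiplication into paraproducts plus a remainder, handle the ``paraproduct commutator'' pieces by pairing a two-weight paraproduct bound with a one-weight bound on the inner iterate, and leave a remainder term $R_k$. Up to that point you are on the paper's track.

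The gap is in $R_k$. In the paper's notation, $R_k f = \Pi_{C_b^{k-1}(S)f}\,b - C_b^{k-1}(S)\Pi_f b = \Theta_b\bigl(C_b^{k-1}(S)\bigr)f$. This is \emph{not} a composition of $C_b^{k-1}(S)$ with a fixed paraproduct: the output of $C_b^{k-1}(S)$ appears as the \emph{symbol} of a paraproduct acting on $b$, and so you cannot bound it by factoring a weighted paraproduct norm against a one-weight norm of $C_b^{k-1}(S)$. Your proposal waves at ``Cauchy--Schwarz, weighted square function bounds, and Carleson embedding,'' but does not identify the mechanism that actually tames this term, and more importantly does not notice that a naive induction on $k$ alone \emph{does not close}: to control $\Theta_b\bigl(C_b^{k}(S)\bigr)$ one is forced, by the Leibniz rule $\Theta_b(ST)=\Theta_b(S)T+S\Theta_b(T)$, to control all the higher iterates $\Theta_b^M\bigl(C_b^{j}(S)\bigr)$ for $j<k$ and $M\ge 1$. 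The paper strengthens the inductive hypothesis accordingly (Theorem~\ref{T:BIGTHM_Shifts}), which is the essential structural move your proposal omits. Without it, each inductive step produces a new remainder you have no estimate for.

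Concretely, the paper's handling of the remainder uses tools your outline never reaches: (i) for cancellative shifts, the closed-form identity
\[
\Theta_b^k(\mbs^{ij})f=\sum_{R}\sum_{P\in R_{(i)},\,Q\in R_{(j)}}a^{\epsilon\eta}_{PQR}\,\widehat f(P,\epsilon)\bigl(\La b\Ra_Q-\La b\Ra_P\bigr)^k h_Q^{\eta},
\]
together with the telescoping operators $U_{(j)}$ (bounded martingale transforms carrying a factor $\|b\|_{BMO}$), so that each additional iterate of $\Theta_b$ costs exactly one $\|b\|_{BMO}$ and one $[w]_{A_2}$; (ii) for the non-cancellative $\mbs^{00}$, the auxiliary operators $\Lambda_{a,b}$, $\widetilde\Lambda_{a,b}$ and the ``switch'' identity $\Theta_c(\Lambda_{a,b})=\Pi_a\Lambda_{c,b}$, which is what makes the $i=j=0$ induction close. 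None of these is derivable from the generic square-function and Carleson-embedding toolbox you cite. A secondary but real omission: in $\mathbb R^n$ with $n>1$, the decomposition of $bg$ involves a third paraproduct $\Gamma_b$, which you drop; it vanishes under $\Theta_b$ but still contributes to the $[\mathfrak P_b,T]$ piece and must be accounted for.

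In short: the skeleton is right, the paraproduct-commutator terms are handled correctly, but the remainder --- which is where all the difficulty lives --- is left unresolved, and the induction as you set it up cannot close without being strengthened to track iterates of $\Theta_b$ as well.
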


The paper is structured as follows. In Section \ref{S:BandN} we discuss the necessary background and notation, such as the Haar system, dyadic shifts, and weighted BMO spaces. Note that most of these concepts were also needed in \cite{HLW2}, and are treated in more detail there. In Section \ref{S:MainProof} we show how, through the Hyt{\"o}nen Representation Theorem, it suffices to prove our main result for dyadic shifts $\mbs^{ij}$. The rest of the paper is dedicated to this. In Section \ref{S:FirstComm} we revisit the two-weight proof for the first commutator $[b, \mathbb{S}^{ij}]$ in \cite{HLW2}, making some definitions which will be useful later, and obtaining the one-weight result. In Section \ref{S:SecondComm} we look at the second iteration $\big[b, [b, \mbs^{ij}]\big]$ -- this will provide the intuition behind the general case of $k$ iterations, and also establish the final tools needed for this. In Section \ref{S:General} we prove the general result.


\section{Background and Notation} \label{S:BandN}


\subsection{The Haar System.} Let  $\mcd^0 = \{ 2^{-k}([0,1)^n + m)\: :\: k\in\mathbb{Z}, m\in \mathbb{Z}^n \}$ be the standard dyadic grid on $\mbr^n$. For any $\omega = (\omega_j)_{j\in\mathbb{Z}} \in (\{0, 1\}^n)^{\mathbb{Z}}$, we let $\mcd^{\omega} \defeq \{Q \stackrel{\cdot}{+} \omega \: : \: Q\in\mcd^0\}$ be the translate of $\mcd^0$ by $\omega$, where
	$$Q \stackrel{\cdot}{+} \omega \defeq Q + \sum_{j: 2^{-j}< l(Q)} 2^{-j}\omega_j,$$
where $l(Q)$ denotes the side length of any cube $Q$ in $\mathbb{R}^n$. Every dyadic grid $\mcd^{\omega}$ is characterized by two fundamental properties, namely (1) For every $P, Q \in \mcd^{\omega}$, $P \cap Q \in \{ P, Q, \emptyset \}$, and (2)  For every fixed $k \in\mathbb{Z}$, the cubes $Q\in\mcd^{\omega}$ with $l(Q) = 2^{-k}$ partition $\mbr^n$.
Let $\mcd$ be a fixed dyadic grid, $Q\in\mcd$, and $k$ be a non-negative integer. We let $Q^{(k)}$ denote  the $k^{\text{th}}$ ancestor of $Q$ in $\mcd$, i.e. the unique element of $\mcd$ with side length $2^k l(Q)$ that contains $Q$, and $Q_{(k)}$ denote the collection of $k^{\text{th}}$ descendants of $Q$ in $\mcd$, i.e. the $2^{kn}$ disjoint subcubes of $Q$ in $\mcd$ with side length $2^{-k} l(Q)$. 	

The Haar system on $\mcd$ is defined by associating $2^n$ Haar functions to every $Q = I_1 \times \cdots \times I_n \in \mcd$, where each $I_i$ is a dyadic interval in $\mbr$ with length $l(Q)$:
	$$ h_Q^{\ep} (x) \defeq \prod_{i=1}^n h_{I_i}^{\ep_i}(x_i),$$
for all $x = (x_1, \ldots, x_n) \in \mbr^n$, where $\ep \in \{0, 1\}^n$ is called the signature of $h_Q^{\ep}$, and $h_{I_i}^{\ep_i}$ is one of the one-dimensional Haar functions:
	$$ h_I^0 \defeq \frac{1}{\sqrt{|I|}} (\unit_{I_{-}} - \unit_{I_{+}}); \:\:\: h_I^1 \defeq \frac{1}{\sqrt{|I|}} \unit_I.$$
We write $\ep = 1$ when $\ep_i = 1$ for all $i$. In this case, $h_Q^1 = |Q|^{-\frac{1}{2}} \unit_Q$ is said to be non-cancellative, while all the other $2^n-1$ Haar functions associated with $Q$ are cancellative. Moreover, the cancellative Haar functions on a fixed dyadic grid form an orthonormal basis for $L^2(\mbr^n)$. We then write for any $f \in L^2(\mbr^n)$:
	$$ f = \sum_{Q\in\mcd, \ep\neq 1} \widehat{f}(Q,\ep) h_Q^{\ep}, $$
where $\widehat{f}(Q, \ep) \defeq \La f, h_Q^{\ep}\Ra$ and $\La \cdot,\cdot\Ra$ denotes the usual inner product in $L^2(\mbr^n)$.	Then
	$$ \La f\Ra_Q = \sum_{R\in\mcd, R\supsetneq Q; \ep\neq 1} \widehat{f}(R,\ep) h_R^{\ep}(Q), $$
where $\La f\Ra_Q \defeq |Q|^{-1} \int_Q f\,dx$ denotes the average of $f$ over $Q$.
	

\subsection{$A_p$ weights.} By a weight on $\mbr^n$ we mean an almost everywhere positive, locally integrable function $w$. For some $1<p<\infty$ with H{\"o}lder conjugate $q$, we say that a weight $w$ belongs to the Muckenhoupt $A_p$ class if
	$$[w]_{A_p} \defeq \sup_{Q} \La w\Ra_Q \La w^{1-q}\Ra_Q^{p-1} < \infty,$$
where  the supremum is over all cubes in $\mbr^n$. We let $w' \defeq w^{q-1}$, the `conjugate' weight to $w$. Then $w\in A_p$ if and only if $w' \in A_q$, with $[w']_{A_q} = [w]_{A_p}^{q-1}$. 

For a weight $w$ and $1<p<\infty$, let $L^p(w)$ denote the usual $L^p$ space with respect to the measure $dw = w\,dx$, i.e. the space of all functions $f$ such that $\|f\|_{L^p(w)}^p = \int_{\mbr^n} |f|^p\,dw < \infty$. If $w\in A_p$, we then have the duality $(L^p(w))^* \equiv L^q(w')$, in the sense that
	$$ \|f\|_{L^p(w)} = \sup\{ |\La f, g\Ra| \: :\: g \in L^q(w'), \|g\|_{L^q(w')} \leq 1\}.$$

We review some of the crucial properties of $A_p$ weights, starting the maximal function: 
	$$Mf \defeq \sup_{Q} \left(\La |f|\Ra_Q \unit_Q \right),$$
where again the supremum is over all cubes $Q$ in $\mbr^n$. If $w\in A_p$, then the following bound is sharp \cites{Muck, Buckley} in the exponent of $[w]_{A_p}$:
\begin{equation} \label{E:MaxOneWeight}
	\|M\|_{L^p(w)} \lesssim [w]_{A_p}^{\frac{1}{p-1}},
\end{equation}
where for some quantities $A$ and $B$, ``$A \lesssim B$'' denotes $A \leq CB$ for some absolute constant $C$. Another important tool is the dyadic square function:
	$$(S_{\mcd}f)^2 \defeq \sum_{Q\in\mcd,\ep\neq 1} |\widehat{f}(Q,\ep)|^2 \frac{\unit_Q}{|Q|},$$
for which we have the sharp \cite{CruzClassicOps} one-weight inequality:	
\begin{equation} \label{E:SFOneWeight}
	\|S_{\mcd}\|_{L^p(w)} \lesssim [w]_{A_p}^{\max\left(\frac{1}{2}, \frac{1}{p-1}\right)}.
\end{equation}
For a dyadic grid $\mcd$ on $\mbr^n$ and a pair $(i, j)$ of non-negative integers, define a shifted dyadic square function:
	\begin{equation} \label{E:ShiftedDSFDef}
	\big(\widetilde{S_{\mcd}}^{i,j} f\big)^2 \defeq \sum_{Q\in\mcd, \ep\neq 1} \bigg(\sum_{P \in (Q^{(j)})_{(i)}} |\widehat{f}(P,\ep)| \bigg)^2 \frac{\unit_Q}{|Q|}.
	\end{equation}
The following was proved in \cite[Lemma 2.2]{HLW2}:
\begin{equation} \label{E:ShiftedSF}
\|\widetilde{S_{\mcd}}^{i,j}\|_{L^2(w)} \lesssim 2^{\frac{n}{2}(i+j)} [w]_{A_2}.
\end{equation}

Lastly, we recall the extrapolation property of $A_p$ weights \cite{Extrapolation}.  Suppose an operator $T$ satisfies:	
	$$\|Tf\|_{L^2(w)} \lesssim A [w]_{A_2}^{\alpha} \|f\|_{L^2(w)}$$
for all $w\in A_2$, for some fixed $A> 0$ and $\alpha>0$. Then:
	$$\|Tf\|_{L^p(w)} \lesssim A [w]_{A_p}^{\alpha\max\left(1, \frac{1}{p-1}\right)}\|f\|_{A_p},$$
for all $1<p<\infty$ and all $w\in A_p$.


\subsection{Weighted BMO} Let $w$ be a weight on $\mbr^n$. The weighted BMO space $BMO(w)$ is the space of all locally integrable functions $b$ such that
	$$\|b\|_{BMO(w)} \defeq \sup_Q \frac{1}{w(Q)} \int_Q |b - \La b\Ra_Q|\,dx < \infty,$$
where $w(Q) = \int_Q \,dw$, and the supremum is over all cubes $Q$. Note that if we take $w=1$ we obtain the usual space of functions with bounded mean oscillation, which we simply denote by $BMO$. If  $w\in A_p$, it was shown in \cite{MuckWheeden} that $\|\cdot\|_{BMO(w)}$ is equivalent to the norm $\|\cdot\|_{BMO^q(w)}$, defined as
	$$\|b\|^q_{BMO^q(w)} \defeq \sup_Q \frac{1}{w(Q)} \int_Q |b - \La b\Ra_Q|^q\,dw'.$$
Given a dyadic grid $\mcd$, we define the dyadic versions of these spaces, $BMO_{\mcd}(w)$ and $BMO^q_{\mcd}(w)$, by taking the supremum over $Q\in\mcd$ instead. 

Now suppose $\mu,\lb\in A_p$ and define $\nu \defeq \mu^{\frac{1}{p}} \lb^{-\frac{1}{p}}$. As shown in \cite{HLW2}, $\nu$ is then an $A_2$ weight. The following inequality will be very useful:
	\begin{equation}\label{E:NuH1BMODuality}
	|\La b,\Phi\Ra| \lesssim [\nu]_{A_2} \|b\|_{BMO^2_{\mcd}(\nu)} \|S_{\mcd}\Phi\|_{L^1(\nu)}.
	\end{equation}
This in fact holds for all $A_2$ weights $w$, and comes from a duality relationship between $BMO^2_{\mcd}(w)$ and the dyadic weighted Hardy space $\mch^1_{\mcd}(w)$. See \cite[Section 2.6]{HLW2} for details.

\subsection{Paraproducts.} Recall the paraproducts with symbol $b$ on $\mbr$:
	$\Pi_bf = \sum_{I\in\mcd} \widehat{b}(I)\La f\Ra_I h_I$  and  $\Pi^*_bf = \widehat{b}(I)\widehat{f}(I) |I|^{-1}\unit_I$. 
These are most useful in dyadic proofs due to the identity: $bf = \Pi_b f + \Pi^*_b f + \Pi_f b$. To generalize this property to $\mbr^n$, we define the multidimensional paraproducts below.

\begin{defn} \label{D:Paraprods}
For a fixed dyadic grid $\mcd$ on $\mbr^n$, define the following paraproduct operators with symbol $b$:
$$\Pi_b f \defeq \sum_{Q \in \mcd, \ep \neq 1} \widehat{b}(Q, \ep) \left<f\right>_Q h_Q^{\ep}, \:\: \:\:\: 
	\Pi_b^* f \defeq \sum_{Q \in \mcd, \ep \neq 1} \widehat{b}(Q, \ep) \widehat{f}(Q, \ep) \frac{\unit_Q}{|Q|},$$
and
$$\Gamma_b f \defeq \sum_{Q \in \mcd} \sum_{\stackrel{\ep, \eta \neq 1}{\ep \neq \eta}} \widehat{b}(Q, \ep) \widehat{f}(Q, \eta) \frac{1}{\sqrt{|Q|}} h_Q^{\ep + \eta},$$
where for every $\ep,\eta \in \{0,1\}^n$, $\ep+\eta$ is defined by letting $(\ep+\eta)_i$ be $0$ if $\ep_i\neq\eta_i$ and $1$ otherwise.
\end{defn}

Then  $bf = \Pi_b f + \Pi_b^*f + \Gamma_b f + \Pi_f b$. Note that, while the first two paraproducts above reduce to the standard one-dimensional ones when $n=1$, the third paraproduct $\Gamma_b$ vanishes in this case. This third paraproduct comes from the fact that $h_Q^{\ep}h_Q^{\eta} = |Q|^{-\rfrac{1}{2}} h_Q^{\ep+\eta}$.

For ease of notation later, we denote:
	$$\|T\|_{L^p(w;\:v)} \defeq \left\|T:L^p(w) \to L^p(v)\right\|,$$
the operator norm between two weighted $L^p$-spaces.  And, when $w=v$ we will frequently write 
	$$\|T\|_{L^p(w)} \defeq \left\|T:L^p(w) \to L^p(w)\right\|.$$

The following two-weight result was proved in \cite[Theorem 3.1]{HLW2}. We recall first that the adjoints of $\Pi_b$, $\Pi^*_b$, and $\Gamma_b$ as $L^p(\mu) \to L^p(\lb)$ operators are $\Pi^*_b$, $\Pi_b$, and $\Gamma_b$ as $L^q(\lb') \to L^q(\mu')$ operators, respectively.
\begin{thm} \label{T:Paraprod2WtBds}
Let $\mu, \lb \in A_p$ for some $1<p<\infty$, $\nu = \mu^{\frac{1}{p}}\lb^{-\frac{1}{p}}$, and suppose $b \in \nbmosd$ for a fixed dyadic grid $\mcd$ on $\mbr^n$. Then:
	\begin{align} 
\label{E:PibUBd}
	\left\|\Pi_b\right\|_{L^p(\mu;\:\lb)}  = \left\|\Pi^*_b\right\|_{L^q(\lb';\:\mu')} &\lesssim c \|b\|_{BMO^2_{\mcd}(\nu)},\\
 \label{E:PibStarUBd}
	\left\|\Pi^*_b\right\|_{L^p(\mu;\:\lb)}  = \left\|\Pi_b\right\|_{L^q(\lb';\:\mu')} & \lesssim c \|b\|_{BMO^2_{\mcd}(\nu)},\\
\label{E:GammabUBd}
	\left\|\Gamma_b\right\|_{L^p(\mu;\:\lb)} = \left\|\Gamma_b\right\|_{L^q(\lb';\:\mu')}  & \lesssim  c \|b\|_{BMO^2_{\mcd}(\nu)},
	\end{align}
where, in each case, $c$ denotes a constant depending on $\mu$, $\lb$, and $p$.
\end{thm}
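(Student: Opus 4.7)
The plan is to prove all three operator bounds in the form $L^p(\mu)\to L^p(\lb)$ directly; the listed equalities with the adjoint operator norms are then immediate from the unweighted adjoint relations stated just before the theorem. The master estimate driving everything is the weighted $H^1$–$BMO^2_{\mcd}(\nu)$ duality inequality \eqref{E:NuH1BMODuality}, because $\nu = \mu^{1/p}\lb^{-1/p}$ is an $A_2$ weight. To use it, I would invoke the $L^p(\lb)$–$L^q(\lb')$ duality and, for each paraproduct $P\in\{\Pi_b,\Pi_b^*,\Gamma_b\}$ and test function $g$ with $\|g\|_{L^q(\lb')}\le 1$, expand the pairing $\La Pf,g\Ra$ in Haar and re-sum so as to factor out $\widehat b(Q,\ep)$, writing $\La Pf,g\Ra = \La b,\Phi_P(f,g)\Ra$ for an explicit function $\Phi_P(f,g)$.

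The explicit formulas for the Haar coefficients are
$\widehat{\Phi_{\Pi_b}}(Q,\ep) = \La f\Ra_Q\,\widehat g(Q,\ep)$,
$\widehat{\Phi_{\Pi_b^*}}(Q,\ep) = \widehat f(Q,\ep)\La g\Ra_Q$,
and
\[
\widehat{\Phi_{\Gamma_b}}(Q,\ep) \;=\; \frac{1}{\sqrt{|Q|}}\sum_{\eta\neq 1,\;\eta\neq\ep}\widehat f(Q,\eta)\,\widehat g(Q,\ep+\eta).
\]
Applying \eqref{E:NuH1BMODuality} reduces the matter to a pointwise bound for $S_{\mcd}\Phi_P$. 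For the first two paraproducts this is transparent: factoring out $\La f\Ra_Q$ (resp.\ $\La g\Ra_Q$) inside $(S_{\mcd}\Phi)^2$ gives $S_{\mcd}\Phi_{\Pi_b}\le (M_{\mcd}f)(S_{\mcd}g)$ and $S_{\mcd}\Phi_{\Pi_b^*}\le (S_{\mcd}f)(M_{\mcd}g)$. The $\Gamma_b$ case is where the main combinatorial work lives: by Cauchy–Schwarz in the $\eta$-sum, using that $\eta\mapsto\ep+\eta$ is a bijection on $\{0,1\}^n$, one obtains $|\widehat{\Phi_{\Gamma_b}}(Q,\ep)|^2 \lesssim_n |Q|^{-1}\bigl(\sum_\eta|\widehat f(Q,\eta)|^2\bigr)\bigl(\sum_\eta|\widehat g(Q,\eta)|^2\bigr)$, and then summing over $\ep$ and $Q\ni x$ and comparing to the product $(S_{\mcd}f)^2(x)(S_{\mcd}g)^2(x)$ (whose diagonal contribution $Q_1=Q_2$ dominates the desired sum) yields $S_{\mcd}\Phi_{\Gamma_b}\lesssim_n (S_{\mcd}f)(S_{\mcd}g)$.

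The endgame is uniform across the three paraproducts. Since $\nu = \mu^{1/p}\lb'^{1/q}$, Hölder with exponents $(p,q)$ gives
\[
\|S_{\mcd}\Phi_P\|_{L^1(\nu)} \;\le\; \|A_1(f)\|_{L^p(\mu)}\,\|A_2(g)\|_{L^q(\lb')},
\]
where $(A_1,A_2)$ is the appropriate pair among $(M_{\mcd},S_{\mcd})$, $(S_{\mcd},M_{\mcd})$, or $(S_{\mcd},S_{\mcd})$. The one-weight bounds \eqref{E:MaxOneWeight} and \eqref{E:SFOneWeight} applied with weights $\mu\in A_p$ and $\lb'\in A_q$ then furnish constants depending only on $\mu,\lb,p$, and combining with \eqref{E:NuH1BMODuality} produces the three stated inequalities. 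The principal technical obstacle is the $\Gamma_b$ pointwise bound: handling the signature shift $\ep+\eta$ and arguing that the induced off-diagonal terms are controlled by the product of the two square functions. Once that lemma is in hand, the remaining steps are the same duality–Hölder–extrapolation mechanism that already worked for $\Pi_b$ and $\Pi_b^*$.
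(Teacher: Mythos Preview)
Your argument is correct and is essentially the same approach as in the cited reference: the paper does not reprove this theorem but quotes it from \cite[Theorem~3.1]{HLW2}, and the proof there proceeds exactly as you outline, pairing $\La P_b f,g\Ra=\La b,\Phi_P(f,g)\Ra$, invoking the weighted $H^1$--$BMO$ duality \eqref{E:NuH1BMODuality}, bounding $S_{\mcd}\Phi_P$ pointwise by a product of maximal and square functions of $f$ and $g$, and finishing with H{\"o}lder (via $\nu=\mu^{1/p}\lb'^{1/q}$) and the one-weight bounds \eqref{E:MaxOneWeight}, \eqref{E:SFOneWeight}. Your treatment of the $\Gamma_b$ term---Cauchy--Schwarz in $\eta$, the bijection $\eta\mapsto\ep+\eta$, and the elementary inequality $\sum_Q a_Q b_Q\le\bigl(\sum_Q a_Q\bigr)\bigl(\sum_Q b_Q\bigr)$ for nonnegative terms---is the right way to handle that case.
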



\subsection{Dyadic Shifts.} Let $i,j$ be non-negative integers and $\mcd$ a dyadic grid on $\mbr^n$. A dyadic shift operator with parameters $(i,j)$ is an operator of the form:
	\begin{equation} \label{E:SijDef}
	\mbs_{\mcd}^{ij}f \defeq \sum_{R \in \mcd} \sum_{P \in R_{(i)} , Q \in R_{(j)}} \sum_{\ep,\eta \in \{0,1\}^n} a^{\ep\eta}_{PQR} \widehat{f}(P,\ep) h_Q^{\eta},
	\end{equation}
where $a^{\ep\eta}_{PQR}$ are coefficients with $a^{\ep\eta}_{PQR} \leq |R|^{-1}\sqrt{|P||Q|}$. The shift is said to be cancellative if all Haar functions in its definition are cancellative, that is $a^{\ep\eta}_{PQR} = 0$ whenever $\ep = 1$ or $\eta = 1$. Otherwise, it is called non-cancellative. 

The following weighted inequality for dyadic shifts, which can be found in \cites{HytLacey,Lacey,TreilSharpA2}, will be extremely useful:

\begin{thm} \label{T:SijWeighted}
Let $\mbs_{\mcd}^{ij}$ be a dyadic shift operator. Then for any weight $w\in A_2$:
	\begin{equation} \label{E:SijWeighted}
	\left\|\mbs_{\mcd}^{ij}\right\|_{L^2(w)}  \lesssim \kappa_{ij} [w]_2, 
	\end{equation}
where $\kappa_{ij} \defeq \max\{i, j, 1\}$ is the complexity of the shift.
\end{thm}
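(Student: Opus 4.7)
The plan is to establish \eqref{E:SijWeighted} via the modern framework of sparse domination, combined with the classical sparse $A_2$ estimate. First I would split $\mbs_{\mcd}^{ij}$ into cancellative and non-cancellative parts. Terms involving $\ep = 1$ or $\eta = 1$ are paraproduct-type operators (after using $h_R^{1} = |R|^{-1/2}\unit_R$), and their $L^2(w)$ bounds with the desired $[w]_{A_2}$ dependence follow from Theorem~\ref{T:Paraprod2WtBds} specialized to $\mu = \lb = w$; the dyadic BMO norm of the effective symbol is controlled by the coefficient bound $|a^{\ep\eta}_{PQR}| \leq |R|^{-1}\sqrt{|P||Q|}$. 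This leaves the cancellative case, where unweighted $L^2$-boundedness with constant at most $1$ is immediate from orthogonality of the Haar system.

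The core step is to prove, for cancellative shifts and compactly supported $f, g$, a bilinear sparse bound of the form
$$|\La \mbs_{\mcd}^{ij} f, g\Ra| \lesssim \kappa_{ij} \sum_{Q \in \mathcal{S}} \La |f|\Ra_Q \La |g|\Ra_Q |Q|,$$
where $\mathcal{S} \subset \mcd$ is sparse in the usual sense (pairwise disjoint major subsets $E_Q$ with $|E_Q| \geq \frac{1}{2}|Q|$). The construction proceeds by a Lerner--Nazarov-style principal-cubes stopping argument: starting from a large ambient cube, one selects a stopping cube $Q'$ whenever either $\La |f|\Ra_{Q'}$ or $\La |g|\Ra_{Q'}$ doubles relative to its stopping parent. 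Inside each stopping generation, one bounds the corresponding localized piece of $\mbs_{\mcd}^{ij}$ by $\kappa_{ij}\La |f|\Ra_Q \La |g|\Ra_Q |Q|$ using the coefficient bound, orthogonality, and a direct calculation that accounts for the $\kappa_{ij}$ intermediate generations between the cubes $P \in R_{(i)}$ and $Q \in R_{(j)}$. With this in hand, the weighted bound reduces to the well-known sparse $A_2$ estimate
$$\sum_{Q \in \mathcal{S}} \La |f|\Ra_Q \La |g|\Ra_Q |Q| \lesssim [w]_{A_2} \|f\|_{L^2(w)} \|g\|_{L^2(w^{-1})},$$
proved by expressing both averages as weighted dyadic maximal functions, applying \eqref{E:MaxOneWeight} at $p=2$, and using disjointness of the $E_Q$ together with Cauchy--Schwarz; duality then yields \eqref{E:SijWeighted}.

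The main obstacle will be obtaining the \emph{linear} dependence on $\kappa_{ij}$ rather than a quadratic or higher power, which a careless stopping construction would produce. The key is to treat all scales $\ell(R), 2^{-1}\ell(R), \ldots, 2^{-\max(i,j)}\ell(R)$ between consecutive stopping cubes as a single block in the sparse decomposition, rather than iterating the stopping scale by scale. This bundling, combined with the sharp coefficient bound on the $a^{\ep\eta}_{PQR}$ and the $L^2$-orthogonality of Haar functions, is what produces the linear factor $\kappa_{ij}$ in the final estimate; it is the technical heart of the arguments referenced in the statement.
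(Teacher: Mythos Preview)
The paper does not supply its own proof of this theorem: it is quoted from the literature with citations to \cite{HytLacey}, \cite{Lacey}, and \cite{TreilSharpA2}. Your sparse-domination outline is a legitimate route to \eqref{E:SijWeighted} and is close in spirit to the argument in \cite{Lacey}; the Bellman-function approach of \cite{TreilSharpA2} is the other standard proof. So on the cancellative part your plan is fine and essentially matches one of the cited sources.

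There is, however, a circularity in how you handle the non-cancellative terms. You appeal to Theorem~\ref{T:Paraprod2WtBds} with $\mu=\lb=w$ to get the paraproduct-type pieces, but that theorem as stated only gives a bound with an unspecified constant $c=c(\mu,\lb,p)$; it does not record the linear dependence on $[w]_{A_2}$ that you need. More importantly, in the paper's logical order the sharp one-weight paraproduct bound \eqref{E:Paraprod1WtBds} is \emph{deduced from} Theorem~\ref{T:SijWeighted} by writing $\Pi_b$, $\Pi_b^*$, $\Gamma_b$ as $\mbs^{00}$ shifts, so invoking paraproduct bounds to prove the shift bound is circular here. You should instead cite an independent source for the linear-in-$[w]_{A_2}$ paraproduct bound (e.g.\ \cite{Beznosova} or \cite{Chung}), or extend your sparse argument directly to the non-cancellative pieces. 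Note also that for general $(i,j)$ the terms with $\ep=1$ or $\eta=1$ are not literally paraproducts but averaged/generalized paraproducts, so ``paraproduct-type'' needs a little more unpacking than you indicate.
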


As a first application of this result, we observe that that all paraproducts in Definition \ref{D:Paraprods} can be expressed in terms of dyadic shifts with parameters $(0, 0)$, that is, shifts of the form
	$$\mbs^{00}f = \sum_{Q\in\mcd; \ep,\eta\in\{0,1\}^n} a_Q^{\ep\eta} \widehat{f}(Q,\ep) h_Q^{\eta};\:\: |a_Q^{\ep\eta}| \leq 1.$$
For instance, we may write $\Pi_b = \|b\|_{BMO^2_{\mcd}} \mbs^{00}$, where $a^{\ep\eta}_Q = 0$ if $\ep\neq 1$ or $\eta=1$, and $a^{\ep\eta}_Q = \widehat{b}(Q,\eta) |Q|^{-1/2} \|b\|_{BMO^2_{\mcd}}^{-1}$ if $\ep=1$ and $\eta\neq 1$. Similar expressions can be obtained for the other two paraproducts.  Then, if $P_b$ is any one of  $\Pi_b$, $\Pi^*_b$, or $\Gamma_b$, it follows from \eqref{E:SijWeighted} that	
	\begin{equation} \label{E:Paraprod1WtBds}
	\|P_b\|_{L^2(w)} \lesssim \|b\|_{BMO^2_{\mcd}} [w]_{A_2},
	\end{equation}
for any $w \in A_2$. These one-weight inequalities for paraproducts were obtained in \cite{Beznosova} for the one-dimensional case $n=1$, and, using the Wilson Haar basis, in \cite{Chung} for $n \geq 1$.

We can also use dyadic shifts to recover the one-weight bound for the martingale transform:
	$$T_{\sigma}f \defeq \sum_{Q\in\mcd,\ep\neq 1} \sigma_{Q,\ep} \widehat{f}(Q,\ep) h_Q^{\ep},$$
where $|\sigma_{Q,\ep}| \leq 1$ for all $Q\in\mcd$ and $\ep\neq 1$. For $w\in A_2$, all martingale transforms $T_{\sigma}$ are uniformly bounded on $L^2(w)$. In particular, there is a universal constant $C$ such that
\begin{equation} \label{E:MartTBd}
\|T_{\sigma}\|_{L^2(w)} \leq C [w]_{A_2}, 
\end{equation}
for all $\sigma$. This result, obtained in \cite{Wittwer} for the one-dimensional case, trivially follows from the observation that $T_{\sigma} = \mbs^{00}$, where $a^{\ep\eta}_{Q}$ is defined to be $\sigma_{Q,\ep}$ if $\ep=\eta\neq 1$ and $0$ otherwise.

The following simple consequence of this fact will come in handy later:

\begin{prop} \label{P:BMOSigma}
Let $w \in A_2$, $b \in BMO^2_{\mcd}(w)$, and $T_{\sigma}$ be a martingale transform. Then $T_{\sigma}b \in BMO^2_{\mcd}(w)$, with
	\begin{equation} \label{E:BMOSigma}
	\|T_{\sigma}b\|_{BMO^2_{\mcd}(w)} \lesssim [w]_{A_2} \|b\|_{BMO^2_{\mcd}(w)}.
	\end{equation}
\end{prop}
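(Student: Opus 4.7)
My plan is to reduce the bound to the weighted $L^2$ estimate \eqref{E:MartTBd} for martingale transforms by localizing the oscillation of $T_\sigma b$ over each cube to an application of $T_\sigma$ on an $L^2$ piece of $b$. The essential observation is that $T_\sigma$ preserves the Haar support condition $P\subseteq Q$, so the oscillation of $T_\sigma b$ on $Q$ depends only on the portion of $b$ supported at scales $\leq l(Q)$ inside $Q$.

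Specifically, I would first establish the identity
$$T_\sigma b - \La T_\sigma b\Ra_Q \;=\; T_\sigma\bigl[(b - \La b\Ra_Q)\unit_Q\bigr] \qquad \text{on } Q$$
for every $Q\in\mcd$. This follows by splitting the Haar expansion $T_\sigma b = \sum_{P,\eta\neq 1}\sigma_{P,\eta}\widehat{b}(P,\eta)h_P^\eta$ into three groups: the cubes $P\supsetneq Q$ contribute functions constant on $Q$ (absorbed into $\La T_\sigma b\Ra_Q$); the cubes $P$ disjoint from $Q$ vanish on $Q$; and the cubes $P\subseteq Q$ contribute mean-zero terms supported in $Q$. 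The surviving piece, $\sum_{P\subseteq Q,\eta\neq 1}\sigma_{P,\eta}\widehat{b}(P,\eta)h_P^\eta$, is precisely $T_\sigma$ applied to the $L^2$ function whose Haar expansion is $\sum_{P\subseteq Q,\eta\neq 1}\widehat{b}(P,\eta)h_P^\eta = (b - \La b\Ra_Q)\unit_Q$.

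With this identity in hand, I would use the $L^2(w')$-boundedness of martingale transforms \eqref{E:MartTBd}, applied to $w'\in A_2$ (so $[w']_{A_2}\lesssim [w]_{A_2}$), together with the fact that $(b-\La b\Ra_Q)\unit_Q$ is supported on $Q$, to estimate
\begin{align*}
\int_Q \bigl|T_\sigma b - \La T_\sigma b\Ra_Q\bigr|^2\, dw'
  &\le \int_{\mbr^n}\bigl|T_\sigma[(b - \La b\Ra_Q)\unit_Q]\bigr|^2\, dw' \\
  &\lesssim [w]_{A_2}^2\int_Q |b - \La b\Ra_Q|^2\, dw' \\
  &\le [w]_{A_2}^2\, w(Q)\,\|b\|_{\wbmosd}^2.
\end{align*}
Dividing by $w(Q)$, taking the supremum over $Q\in\mcd$, and then square roots yields \eqref{E:BMOSigma}. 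There is no real obstacle here beyond carefully verifying the localization identity; once that is in place, the estimate is an immediate consequence of \eqref{E:MartTBd} and the definition of the $\wbmosd$ norm.
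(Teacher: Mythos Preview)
Your proposal is correct and follows essentially the same approach as the paper: both proofs rest on the localization identity $\unit_Q(T_\sigma b - \La T_\sigma b\Ra_Q) = T_\sigma\bigl(\unit_Q(b - \La b\Ra_Q)\bigr)$ and then apply the weighted $L^2$ bound \eqref{E:MartTBd} for $T_\sigma$ on $L^2(w^{-1})$. Your write-up simply spells out the Haar-support reasoning behind the identity in more detail than the paper does.
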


\begin{proof}
It is easy to observe that
	$\unit_Q(T_{\sigma}b - \La T_{\sigma}b\Ra_{Q}) = T_{\sigma}\big( \unit_Q (b - \La b\Ra_Q) \big)$,
and so
	$$ \|T_{\sigma}b\|_{BMO^2_{\mcd}(w)} = \sup_{Q\in\mcd} \frac{1}{w(Q)^{\rfrac{1}{2}}} \left\| T_{\sigma} \big( \unit_Q (b - \La b\Ra_Q) \big) \right\|_{L^2(w^{-1})} \lesssim [w]_{A_2} \|b\|_{BMO^2_{\mcd}(w)}.$$
\end{proof}


\section{Proof of The Main Result} \label{S:MainProof}

As in the proof of Theorem \ref{T:CZOComm1} in \cite{HLW2}, the backbone of our proof of Theorem \ref{T:Main} is the celebrated Hyt{\"o}nen Representation Theorem \cites{HytRepOrig, HytRep, HytPerezTV}, which we state below.

\begin{thm} \label{T:HRP}
Let $\mathbf{T}$ be a Calder{\'o}n-Zygmund operator associated with a $\delta$-standard kernel. Then there exist dyadic shift operators $\mbs_{\omega}^{ij}$ with parameters $(i, j)$ for all non-negative integers $i, j$ such that
	$$\La \mathbf{T}f, g\Ra = c\: \mathbb{E}_{\omega} \sum_{i,j=0}^{\infty} 2^{- \kappa _{i,j}\frac{\delta}{2}} \La \mbs_{\omega}^{ij}f, g\Ra,$$
for all bounded, compactly supported functions $f$ and $g$, where $c$ is a constant depending on the dimension $n$ and on $\mathbf{T}$. Here all $\mbs^{ij}_{\omega}$ with $(i,j) \neq (0,0)$ are cancellative, but the shifts $\mbs_{\omega}^{00}$ may not be cancellative.
\end{thm}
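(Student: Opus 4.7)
The plan is to follow the standard Nazarov--Treil--Volberg / Hyt\"onen random-grid strategy. Fix bounded, compactly supported $f,g$ and, for a random translation parameter $\omega$, expand both in the cancellative Haar basis of $\mcd^{\omega}$:
$$\La \mathbf{T}f, g\Ra = \sum_{\substack{P,Q\in\mcd^{\omega}\\ \ep,\eta\neq 1}} \widehat{f}(P,\ep)\,\widehat{g}(Q,\eta)\,\La \mathbf{T}h_P^{\ep}, h_Q^{\eta}\Ra.$$
The proposal is to organize the pairs $(P,Q)$ by the relation between them: we may assume $l(P)\le l(Q)$ (the symmetric case is handled by duality and gives the transposed parameters), and group pairs according to integers $i,j\ge 0$ such that the smallest common dyadic ancestor $R$ satisfies $P\in R_{(i)}$, $Q\in R_{(j)}$ with $l(P)\le l(Q)$ (so $j\le i$ in this half). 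Inside each $R$ I will verify that the matrix entries $\La \mathbf{T}h_P^{\ep}, h_Q^{\eta}\Ra$ define coefficients $a_{PQR}^{\ep\eta}$ of a dyadic shift in the sense of \eqref{E:SijDef} after extracting the decay factor $2^{-\kappa_{i,j}\delta/2}$.

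The central analytic step is the kernel estimate: for $P\subset R$, $Q\subset R$ that are separated and whose sidelengths differ by the factor $2^{\kappa_{i,j}}$, the cancellation of a Haar function against the $\delta$-H\"older condition on $K(x,y)$ produces
$$|\La \mathbf{T}h_P^{\ep}, h_Q^{\eta}\Ra| \lesssim \frac{\sqrt{|P||Q|}}{|R|}\cdot 2^{-\kappa_{i,j}\delta/2},$$
provided that both $P$ and $Q$ are ``good'' inside $R$, i.e. sit well inside $R$ away from its boundary at the appropriate scale. This is exactly the normalization required for $a_{PQR}^{\ep\eta}$ to define a cancellative dyadic shift with parameters $(i,j)$. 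The loss in the separation hypothesis is absorbed because we choose the goodness parameter so that the forbidden region has probability exponentially small in $\kappa_{i,j}$, and we gain an extra $2^{-\kappa_{i,j}\delta/2}$ from the kernel smoothness.

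The next ingredient is to use the random-grid trick to kill the ``bad'' contribution. A cube $Q\in\mcd^{\omega}$ is declared bad if it comes $\delta$-close to the boundary of some much larger ancestor; a standard calculation shows $\mathbb{P}_{\omega}(Q\text{ bad})$ is independent of the position of $Q$ and can be made as small as we like. Consequently $\mathbb{E}_{\omega}\La \mathbf{T}f, g\Ra = \mathbb{E}_{\omega}\La \mathbf{T}f_{\mathrm{good}}, g_{\mathrm{good}}\Ra$ up to a normalization constant absorbed into the overall factor $c$, so it suffices to sum only over good pairs. Collecting good pairs by their common ancestor $R$ and complexity indices $(i,j)$ then produces the desired dyadic shift $\mbs^{ij}_{\omega}$ and the series $\sum_{i,j}2^{-\kappa_{i,j}\delta/2}\La \mbs^{ij}_{\omega} f, g\Ra$.

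The remaining and most delicate case is $(i,j)=(0,0)$, which corresponds to pairs $P,Q$ of comparable size sharing a common ancestor of comparable size, including the strictly nested situation $P=Q$ and $P\subsetneq Q$ (or vice versa). Here the kernel bound no longer gives cancellation across a separation, and after summing one is left with paraproduct-type pieces of the form $\Pi_{b_{\mathbf{T}}}$ with $b_{\mathbf{T}} = \mathbf{T}^*1$ (or $\mathbf{T}1$ on the dual side), which lie in $BMO$ by the $T1$ theorem. These must be absorbed into a shift $\mbs^{00}_{\omega}$ in which the Haar function with signature $\ep=1$ or $\eta=1$ appears, i.e.~a non-cancellative shift, whereas the genuinely off-diagonal contributions to $(i,j)=(0,0)$ remain cancellative. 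I expect the main obstacle to be precisely this bookkeeping at complexity $(0,0)$: one must carefully separate the paraproduct diagonal from the off-diagonal near-neighbor terms and verify that the resulting coefficients still satisfy $|a_{PQR}^{\ep\eta}|\le |R|^{-1}\sqrt{|P||Q|}$, using $\|\mathbf{T}1\|_{BMO},\|\mathbf{T}^*1\|_{BMO}\lesssim 1$ to control the non-cancellative pieces.
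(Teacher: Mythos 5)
The paper does not prove this statement: it is the Hyt\"onen Representation Theorem, a deep result that the paper simply quotes from the cited references \cite{HytRepOrig}, \cite{HytRep}, \cite{HytPerezTV}. So there is no internal proof to compare your sketch against. As a blind reconstruction of what those references do, your outline is broadly on target: you correctly identify the Haar expansion in a random dyadic grid, the grouping of pairs by common ancestor and complexity indices $(i,j)$, the kernel smoothness estimate producing the exponential decay, the reduction to good cubes via randomization, and the source of the non-cancellative shift at complexity $(0,0)$ being the paraproducts with symbols $\mathbf{T}1$ and $\mathbf{T}^*1$ (in $BMO$ by the $T1$ theorem).

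A few technical caveats so you do not over-claim. In the standard argument it is the \emph{smaller} cube that must be good; requiring both cubes to be good is not what the reduction gives, and is not needed. Your sentence claiming you ``gain an extra $2^{-\kappa_{ij}\delta/2}$ from the kernel smoothness'' on top of the exponentially small bad probability conflates two different mechanisms: the random-grid step is used to show $\mathbb{E}_{\omega}\La \mathbf{T}f,g\Ra$ equals a constant times the good-part pairing exactly (the bad part vanishes in expectation, up to renormalization), while the single decay factor $2^{-\kappa_{ij}\delta/2}$ comes from the kernel's $\delta$-H\"older condition combined with the separation that goodness guarantees; one does not multiply two separate decays. Finally, the precise exponent requires choosing the goodness parameter compatibly with $\delta$ and $n$, and a genuine write-up would need to verify the coefficient bound $|a_{PQR}^{\ep\eta}| \le |R|^{-1}\sqrt{|P||Q|}$ after extracting that factor. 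None of these invalidate your outline, but they are exactly where the real work lives in the cited proofs.
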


It is easy to see that
	\begin{equation} \label{E:Main_t1}
	\La C_b^k(\mathbf{T})f, g\Ra = c \: \mathbb{E}_{\omega} \sum_{i,j=0}^{\infty} 2^{- \kappa _{i,j}\frac{\delta}{2}}  \La C_b^k(\mbs^{ij}_{\omega})f, g\Ra,
	\end{equation}
for all integers $k\geq 1$. Thus it suffices to show that $C_b^k(\mbs^{ij}_{\omega})$ are uniformly bounded, regardless of $\omega$, with bounds that depend at most polynomially on $\kappa_{ij}$. Since our arguments will be independent of choice of $\omega$, we fix a dyadic grid $\mcd$ and suppress the $\omega$ subscript in what follows. We claim that:

\begin{thm} \label{T:BigThm_Sij}
Let $\mu, \lb \in A_p$ for some $1<p<\infty$ and $b \in BMO^2_{\mcd}(\nu) \cap BMO^2_{\mcd}$, where $\nu = \mu^{\frac{1}{p}} \lb^{-\frac{1}{p}}$. For any pair $(i, j)$ of non-negative integers, let $\mbs^{ij} \defeq \mbs^{ij}_{\mcd}$ be a dyadic shift as in the Hyt{\"o}nen Representation Theorem. Then for any integer $k \geq 1$:
	\begin{equation}
	\left\| C_b^k(\mbs^{ij}) \right\|_{L^p(\mu;\:\lb)} \leq c\: \kappa^k_{ij} \|b\|^{k-1}_{BMO^2_{\mcd}} \|b\|_{BMO^2_{\mcd}(\nu)},
	\end{equation}
where $c$ is a constant depending on $n$, $p$, $\mu$, $\lb$, and $k$. In particular, if $\mu = \lb = w \in A_2$:
	\begin{equation}
	\left\| C_b^k(\mbs^{ij}) \right\|_{L^2(w)} \leq c\: \kappa^k_{ij} \|b\|^k_{BMO^2_{\mcd}} [w]_{A_2}^{k+1},
	\end{equation}
where $c$ is a constant depending on $n$ and $k$.
\end{thm}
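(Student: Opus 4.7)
We argue by induction on $k$. The base case $k=1$ is reviewed in Section~\ref{S:FirstComm}, where the dyadic methods of \cite{HLW2} give $\|[b, \mathbb{S}^{ij}]\|_{L^p(\mu;\lambda)} \lesssim \kappa_{ij} \|b\|_{BMO^2_{\mcd}(\nu)}$; specializing to $\mu=\lambda=w$ (so $\nu\equiv 1$ and $\|b\|_{BMO^2_{\mcd}(\nu)}=\|b\|_{BMO^2_{\mcd}}$) gives the matching one-weight bound for free. The inductive hypothesis at step $k-1$ therefore supplies simultaneously the two-weight bound for $C_b^{k-1}(\mathbb{S}^{ij}):L^p(\mu)\to L^p(\lambda)$ and the one-weight bounds on $L^p(\mu)$ and $L^p(\lambda)$ that will be fed into the inner factor during the inductive step.

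For $k\geq 2$, write $C_b^k(\mathbb{S}^{ij}) = [b, C_b^{k-1}(\mathbb{S}^{ij})]$ and split multiplication by $b$ via the paraproduct identity $bg = \Pi_b g + \Pi_b^* g + \Gamma_b g + \Pi_g b$, producing
\[
C_b^k(\mathbb{S}^{ij}) \;=\; \sum_{P_b\in\{\Pi_b,\,\Pi_b^*,\,\Gamma_b\}} [P_b, C_b^{k-1}(\mathbb{S}^{ij})] \;+\; R^k_b,
\]
with remainder $R^k_b f \defeq \Pi_{C_b^{k-1}(\mathbb{S}^{ij})f}\, b - C_b^{k-1}(\mathbb{S}^{ij})(\Pi_f b)$. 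The paraproduct commutators are handled by the triangle inequality and a change of intermediate space: for $\Pi_b C_b^{k-1}(\mathbb{S}^{ij})$ factor $L^p(\mu)\xrightarrow{C_b^{k-1}} L^p(\mu)\xrightarrow{\Pi_b} L^p(\lambda)$, combining the one-weight IH with the two-weight paraproduct estimate \eqref{E:PibUBd}; for $C_b^{k-1}(\mathbb{S}^{ij})\Pi_b$ route through $L^p(\lambda)$ instead. The same recipe, using \eqref{E:PibStarUBd}--\eqref{E:GammabUBd}, covers $\Pi_b^*$ and $\Gamma_b$. Each piece yields a single factor of $\|b\|_{BMO^2_{\mcd}(\nu)}$ multiplied by $\kappa_{ij}^{k-1}\|b\|_{BMO^2_{\mcd}}^{k-1}$, exactly the product required.

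The serious work is the remainder $R^k_b$. For $k=1$ the analogous object $\Pi_{\mathbb{S}^{ij}f}b - \mathbb{S}^{ij}(\Pi_f b)$ is treated in Section~\ref{S:FirstComm}: each difference $\langle b\rangle_Q - \langle b\rangle_P$ is telescoped as a sum of $\widehat{b}(R,\eta)h_R^\eta$ over the $\lesssim \kappa_{ij}$ scales lying between $P$ and $Q$; dualising against $g\in L^q(\lambda')$ and applying the shifted square function estimate \eqref{E:ShiftedSF} together with the weighted $H^1_{\mcd}(\nu)$--$BMO^2_{\mcd}(\nu)$ duality \eqref{E:NuH1BMODuality} closes the bound. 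For $k\geq 2$ we iterate this mechanism: the inner $C_b^{k-1}(\mathbb{S}^{ij})$ is unfolded one commutator at a time, each step decomposing into paraproducts controlled by \eqref{E:Paraprod1WtBds} plus a new nested telescoping residual whose coefficients are martingale transforms of $b$. Proposition~\ref{P:BMOSigma} keeps these transformed symbols in $BMO^2_{\mcd}$ at each step; once the full unfolding is in hand, pairing with $g$ and invoking \eqref{E:NuH1BMODuality} \emph{exactly once} spends the single $\|b\|_{BMO^2_{\mcd}(\nu)}$, while the $k-1$ inner layers supply the $\|b\|_{BMO^2_{\mcd}}^{k-1}$ and the nested shift structure delivers $\kappa_{ij}^k$. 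Section~\ref{S:SecondComm} carries this out explicitly for $k=2$, setting the template that Section~\ref{S:General} promotes to arbitrary $k$.

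The main obstacle is precisely the combinatorial bookkeeping of this recursive expansion: at every level of unfolding, the resulting operator must be identified as a shift of complexity at most $\kappa_{ij}$ whose symbol is a controlled martingale transform of $b$ (so Proposition~\ref{P:BMOSigma} applies), and \eqref{E:NuH1BMODuality} must be invoked exactly once, so the $\kappa_{ij}$, $\|b\|_{BMO^2_{\mcd}}$, and $\|b\|_{BMO^2_{\mcd}(\nu)}$ factors aggregate into precisely the claimed exponents without over-spending the $BMO(\nu)$ norm. Setting $\mu=\lambda=w\in A_2$ and $p=2$ then gives the stated one-weight $L^2(w)$ bound: each of the $k$ paraproduct factors contributes one $[w]_{A_2}$ via \eqref{E:Paraprod1WtBds}, the single shifted $\mathbb{S}^{ij}$ factor contributes one via \eqref{E:SijWeighted} (or \eqref{E:ShiftedSF}), and the final $H^1$--$BMO$ duality step contributes one more, yielding the $[w]_{A_2}^{k+1}$ dependence.
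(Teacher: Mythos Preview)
Your high-level decomposition $C_b^k(\mbs^{ij}) = [\mfp_b, C_b^{k-1}(\mbs^{ij})] + \Theta_b(C_b^{k-1}(\mbs^{ij}))$ and your treatment of the paraproduct commutators via the one-weight inductive hypothesis plus Theorem~\ref{T:Paraprod2WtBds} are correct and match the paper. The gap is entirely in the remainder $R_b^k = \Theta_b(C_b^{k-1}(\mbs^{ij}))$, where your description is too schematic to constitute a proof. Simple induction on $k$ does \emph{not} close here: when you ``unfold $C_b^{k-1}$ one commutator at a time'' you are forced, via \eqref{E:Th_bComm}, to confront $\Theta_b^2(C_b^{k-2}(\mbs^{ij}))$, then $\Theta_b^3(C_b^{k-3}(\mbs^{ij}))$, and so on down to $\Theta_b^k(\mbs^{ij})$. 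The paper therefore strengthens the inductive hypothesis to Theorem~\ref{T:BIGTHM_Shifts}, bounding $\Theta_b^M(C_b^k(\mbs^{ij}))$ for \emph{all} $M\geq 0$ simultaneously, and the core of the argument is Proposition~\ref{P:Comm1} bounding $\Theta_b^k(\mbs^{ij})$ directly. For cancellative shifts this rests on the closed formula \eqref{E:Temp1} with coefficient $(\La b\Ra_Q-\La b\Ra_P)^k$ and the factorization $\Theta_b^k(\mbs^{ij}) = U_{(j)}\Theta_b^{k-1}(\mbs^{ij}) - \Theta_b^{k-1}(\mbs^{ij})U_{(i)}$; for the non-cancellative $\mbs^{00}$ it rests on the $\Lambda_{a,b}$ operators and the ``switch'' identity $\Theta_c(\Lambda_{a,b}) = \Pi_a\Lambda_{c,b}$ of Proposition~\ref{P:Switch}, feeding into Proposition~\ref{P:Th_b^k(P,Lb)}. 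None of these mechanisms appear in your sketch, and the phrase ``nested telescoping residual whose coefficients are martingale transforms of $b$'' does not describe what actually happens.

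Your $[w]_{A_2}$ bookkeeping at the end is also off. In the one-weight case $\nu\equiv 1$, so the duality \eqref{E:NuH1BMODuality} contributes $[\nu]_{A_2}=1$, not an extra $[w]_{A_2}$; and the remainder is not built from ``$k$ paraproduct factors times one shift.'' The correct count is: the base $\Theta_b(\mbs^{ij})$ costs $[w]_{A_2}^2$ (one from the maximal function, one from the shifted square function in \eqref{E:Al}--\eqref{E:Bm}), and each of the $k-1$ applications of $U_{(i)}$, $U_{(j)}$ costs one further $[w]_{A_2}$ via \eqref{E:UjBound}, yielding $[w]_{A_2}^{k+1}$.
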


Then for all $\omega$:
	$$\left\| C_b^k(\mbs^{ij}_{\omega}) \right\|_{L^p(\mu;\:\lb)} \leq c\: \kappa_{ij}^k \|b\|^{k-1}_{BMO} \|b\|_{BMO(\nu)} \text{, and }
	  \left\|C_b^k(\mbs^{ij}_{\omega}) \right\|_{L^2(w)} \leq c\: \kappa^k_{ij} \|b\|_{BMO}^k [w]^{k+1}_{A_2},$$
where we used the equivalence of $BMO(\nu)$ and $BMO^2(\nu)$ norms. Then \eqref{E:Main_t1} gives that
	$$\left\|C_b^k(\mathbf{T})\right\|_{L^p(\mu;\:\lb)} \leq c\: \|b\|^{k-1}_{BMO} \|b\|_{BMO(\nu)} \sum_{i,j=0}^{\infty} 2^{- \kappa _{i,j}\frac{\delta}{2}} \kappa_{ij}^k
		\leq c' \:\|b\|^{k-1}_{BMO} \|b\|_{BMO(\nu)}.$$
The one-weight result in Theorem \ref{T:Main} follows similarly. The rest of the paper is dedicated to proving Theorem \ref{T:BigThm_Sij}.


\section{The Commutator $[b, \mbs^{ij}]$ Revisited} \label{S:FirstComm}

Recall that the product of two functions can be formally decomposed in terms of the paraproducts in Definition \ref{D:Paraprods} as
	$bf = \mathfrak{P}_bf + \Pi_f b,$
where 
	$$\mathfrak{P}_b \defeq \Pi_b + \Pi^*_b + \Gamma_b.$$
Consequently, the commutator $[b, T]$ with an operator $T$ can be expressed as:
	$$C_b^1(T)f = [b, T]f = [\mathfrak{P}_b, T]f + \left( \Pi_{Tf}b - T\Pi_f b \right).$$
Proving inequalities for $[b, T]$ via dyadic methods usually involves proving some appropriate bounds for the paraproducts -- from which the boundedness of the first term $[\mathfrak{P}_b, T]$ usually follows -- and then treating the `remainder term' $\mathcal{R}_1f = \Pi_{Tf}b - T\Pi_f b$ separately. 

Now, remark that if we consider the second iteration:
	$$C_b^2(T)f = [b, C_b^1(T)]f = [\mathfrak{P}_b, C_b^1(T)]f + \left( \Pi_{C_b^1(T) f}b - C_b^1(T)\Pi_f b \right),$$
we will encounter the term $\Pi_{\mathcal{R}_1f}b - \mathcal{R}_1\Pi_f b$. We can already see that more compact notation for repeatedly performing the operation $T \mapsto \Pi_{T\cdot}b - T\Pi_{\cdot}b$ would be useful. 

\begin{defn} \label{D:Theta_b}
Given an operator $T$ on some function space and a function $b$, define the operator $\Theta_b(T)$ by:
	$$\Theta_b(T)f \defeq \Pi_{Tf} b - T \Pi_f b.$$
More generally, let $\Theta_b^0(T) \defeq T$, and
	$$\Theta_b^k(T) \defeq \Theta_b\big( \Theta_b^{k-1}(T) \big),$$
for all integers $k \geq 1$.
\end{defn}

Using this notation, for an operator $T$,
	\begin{equation} \label{E:CommTh_bDec}
	[b, T] = [\mathfrak{P}_b, T] + \Theta_b(T),
	\end{equation}
In particular,
	\begin{equation} \label{E:Comm1_Dec}
	C_b^1(\mbs^{ij}) = [\mfp_b, \mbs^{ij}] + \Theta_b(\mbs^{ij}),
	\end{equation}
so
	\begin{align} 
	\label{E:Cb1}
	\| C_b^1(\mbs^{ij}) \|_{L^p(\mu;\:\lb)} & \leq \|\mfp_b\|_{L^p(\mu;\:\lb)} \big( \|\mbs^{ij}\|_{L^p(\mu)} + \|\mbs^{ij}\|_{L^p(\lb)} \big) + \|\Theta_b(\mbs^{ij})\|_{L^p(\mu;\:\lb)}\\
		& \lesssim \kappa_{ij} C(\mu, \lb, p) \|b\|_{BMO^2_{\mcd}(\nu)} + \|\Theta_b(\mbs^{ij})\|_{L^p(\mu;\:\lb)},
	\end{align}
where the first term is bounded using Theorem \ref{T:Paraprod2WtBds} and \eqref{E:SijWeighted}. Letting $\mu = \lb = w \in A_2$ in \eqref{E:Cb1} and using the one-weight bounds \eqref{E:Paraprod1WtBds} for the paraproducts:
	\begin{align}
	\|C_b^1(\mbs^{ij})\|_{L^2(w)} & \leq 2 \|\mfp_b\|_{L^2(w)} \|\mbs^{ij}\|_{L^2(w)} + \|\Theta_b(\mbs^{ij})\|_{L^2(w)}\\
		& \lesssim \kappa_{ij}\|b\|_{BMO^2_{\mcd}} [w]_{A_2}^2 + \|\Theta_b(\mbs^{ij})\|_{L^2(w)}.
	\end{align}
So it remains to bound the remainder term.   And, based on analysis that will come later, we in fact need to control certain iterates of the remainder term, leading to the following claim:


\begin{prop} \label{P:Comm1}
Under the same assumptions as Theorem \ref{T:BigThm_Sij}, for all integers $k \geq 1$:
\begin{equation} \label{E:Comm1_2Wt}
\left\| \Theta_b^k (\mbs^{ij}) \right\|_{L^p(\mu;\:\lb)} \leq c \kappa^k_{ij}  \|b\|^{k-1}_{BMO^2_{\mcd}} \|b\|_{BMO^2_{\mcd}(\nu)},
\end{equation}
where $c$ is a constant depending on $n$, $p$, $\mu$, $\lb$, and $k$, and
\begin{equation} \label{E:Comm1_1Wt}
\left\| \Theta_b^k (\mbs^{ij}) \right\|_{L^2(w)} \leq c \kappa^k_{ij} \| b \|^k_{BMO^2_{\mcd}} [w]^{k+1}_{A_2},
\end{equation}
where $c$ is a constant depending on $n$ and $k$.
\end{prop}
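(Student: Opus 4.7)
I propose to prove Proposition \ref{P:Comm1} by induction on $k$. For the base case $k=1$, the two-weight bound \eqref{E:Comm1_2Wt} is the ``remainder'' estimate that forms the core of the proof of Theorem \ref{T:CZOComm1} in \cite{HLW2}; crucially, the dyadic argument there uses nothing about the shift coefficients beyond the size bound $|a^{\ep\eta}_{PQR}| \leq |R|^{-1}\sqrt{|P||Q|}$, so it applies to any dyadic shift with parameters $(i,j)$. The one-weight bound \eqref{E:Comm1_1Wt} at $k=1$ is obtained by running the same argument in $L^2(w)$, using the one-weight square-function and shift inequalities \eqref{E:SFOneWeight}, \eqref{E:SijWeighted} in place of their two-weight analogues.

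The inductive step hinges on an explicit Haar representation. A direct computation using the identity $\widehat{\Pi_h b}(Q,\eta) = \widehat{h}(Q,\eta)\La b\Ra_Q$ shows that for any ``generalized shift'' $T$ of the form $Tf = \sum \alpha^{\ep\eta}_{PQR}\widehat{f}(P,\ep)h_Q^\eta$, one has
\begin{equation*}
\Theta_b(T)f = \sum \alpha^{\ep\eta}_{PQR}\bigl(\La b\Ra_Q - \La b\Ra_P\bigr)\widehat{f}(P,\ep)h_Q^\eta,
\end{equation*}
since $\Pi_{Tf}b$ contributes the factor $\La b\Ra_Q$ (tied to the output cube) while $T\Pi_f b$ contributes $\La b\Ra_P$ (tied to the input cube). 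Iterating this identity starting from $T=\mbs^{ij}$ yields
\begin{equation*}
\Theta_b^{k-1}(\mbs^{ij})f = \sum_R\sum_{\substack{P\in R_{(i)}\\ Q\in R_{(j)}}}\sum_{\ep,\eta} a^{\ep\eta}_{PQR}\bigl(\La b\Ra_Q - \La b\Ra_P\bigr)^{k-1}\widehat{f}(P,\ep)h_Q^\eta.
\end{equation*}
Since $b \in BMO^2_{\mcd}$ and $P\in R_{(i)}, Q\in R_{(j)}$ lie at most $i+j \leq 2\kappa_{ij}$ dyadic levels below $R$, the standard telescoping estimate on averages of BMO functions gives the pointwise bound $|\La b\Ra_Q - \La b\Ra_P| \lesssim \kappa_{ij}\|b\|_{BMO^2_{\mcd}}$. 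Consequently, the rescaled operator $\tilde\mbs^{ij} \defeq (\kappa_{ij}\|b\|_{BMO^2_{\mcd}})^{-(k-1)}\Theta_b^{k-1}(\mbs^{ij})$ is itself a dyadic shift with parameters $(i,j)$, up to a $k$-dependent absolute constant to be absorbed into $c$.

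Applying the base case of the proposition to $\tilde\mbs^{ij}$ gives $\|\Theta_b(\tilde\mbs^{ij})\|_{L^p(\mu;\lb)} \lesssim \kappa_{ij}\|b\|_{BMO^2_{\mcd}(\nu)}$, and multiplying back by the scaling factor $(\kappa_{ij}\|b\|_{BMO^2_{\mcd}})^{k-1}$ produces \eqref{E:Comm1_2Wt}; the one-weight bound \eqref{E:Comm1_1Wt} follows identically with the $L^2(w)$ version of the base case. The main obstacles are: (i) articulating that the \cite{HLW2} remainder estimate extends to arbitrary dyadic shifts of given parameters $(i,j)$, not only those arising from the Hyt\"onen theorem---this is essentially already present in the \cite{HLW2} argument, but merits explicit statement; (ii) establishing the pointwise oscillation bound $|\La b\Ra_Q - \La b\Ra_P| \lesssim \kappa_{ij}\|b\|_{BMO^2_{\mcd}}$, which follows by expanding $\La b\Ra_Q - \La b\Ra_R$ and $\La b\Ra_P - \La b\Ra_R$ as telescoping sums of Haar coefficients of $b$ over $\lesssim \kappa_{ij}$ intermediate levels and applying $|\widehat{b}(R',\eta')| \lesssim \sqrt{|R'|}\|b\|_{BMO^2_{\mcd}}$; and (iii) a brief separate treatment of non-cancellative $(0,0)$ shifts, noting in particular that for cancellative $\mbs^{00}$ the coefficient $\La b\Ra_Q - \La b\Ra_P$ vanishes identically (as $P=Q=R$), so $\Theta_b(\mbs^{00})\equiv 0$.
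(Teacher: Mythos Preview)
Your induction via ``$\Theta_b^{k-1}(\mbs^{ij})$ is itself a dyadic shift'' is correct for the \emph{cancellative} shifts $(i,j)\neq(0,0)$, and is in fact a slightly cleaner alternative to the paper's route. The paper derives the same Haar formula (your display with the factor $(\La b\Ra_Q-\La b\Ra_P)^{k-1}$) but then writes $\Theta_b^k(\mbs^{ij})=U_{(j)}\Theta_b^{k-1}(\mbs^{ij})-\Theta_b^{k-1}(\mbs^{ij})U_{(i)}$, where $U_{(j)}$ is a martingale transform of norm $\lesssim j[w]_{A_2}\|b\|_{BMO^2_{\mcd}}$, and inducts. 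Your observation that the rescaled coefficients still obey the dyadic-shift size bound, so that the $k=1$ case applies directly, bypasses the $U_{(j)}$ operators. Both approaches rely on your point (i), which is indeed implicit in the \cite{HLW2} argument.

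The genuine gap is in your point (iii). The non-cancellative shift $\mbs^{00}$ from the Hyt\"onen theorem decomposes as $\mbs^{00}_c+\Pi_a+\Pi_d^*$ with $a,d\in BMO_{\mcd}$. You correctly note that $\Theta_b(\mbs^{00}_c)=0$, but you give no argument for $\Theta_b^k(\Pi_a)$ or $\Theta_b^k(\Pi_d^*)$. Your Haar identity $\Theta_b(T)f=\sum\alpha^{\ep\eta}_{PQR}(\La b\Ra_Q-\La b\Ra_P)\widehat{f}(P,\ep)h_Q^\eta$ depends on $\widehat{\Pi_fb}(P,\ep)=\widehat{f}(P,\ep)\La b\Ra_P$, which fails when $\ep=1$; for the paraproduct $\Pi_a$ the input Haar function is non-cancellative, so the formula breaks down and $\Theta_b(\Pi_a)$ is \emph{not} a dyadic shift of parameters $(0,0)$. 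In the paper this case is anything but brief: one computes $\Theta_b(\Pi_a)=\Pi_a\Pi_b+\Pi_a\Gamma_b+\Lambda_{a,b}-\widetilde\Lambda_{a,b}$, introduces the auxiliary operators $\Lambda_{a,b}$, proves two-weight and sharp one-weight bounds for them, establishes the algebraic identity $\Theta_c(\Lambda_{a,b})=\Pi_a\Lambda_{c,b}$, and then runs a separate induction (Proposition~\ref{P:Th_b^k(P,Lb)}) to control $\Theta_b^k(\Pi_a)$ and $\Theta_b^k(\Pi_a^*)$ for all $k$. Without this machinery (or an alternative), your proposal does not cover the $(0,0)$ case and hence does not prove the proposition as stated.
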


Obviously, letting $k=1$ in this proposition yields the results in Theorem \ref{T:BigThm_Sij} with $k = 1$.
The first result \eqref{E:Comm1_2Wt} was proved for $k=1$ in \cite{HLW2}. In this section we revisit this proof in order to obtain the one-weight result. The latter will follow directly from the two-weight proof in the case of cancellative shifts $\mbs^{ij}$ with $(i, j) \neq (0, 0)$, but the case $(i, j) = (0, 0)$ will require some care. However, as we shall see in the next section, the tools we introduce here lay most of the groundwork for the iterated commutators.


\subsection{The Cancellative Shifts.} We showed in \cite{HLW2} that
\begin{equation} \label{E:ThbSij}
\Theta_b (\mbs^{ij}) = \sum_{\substack{R\in\mcd \\ \ep,\eta\neq 1}} \sum_{\substack{P \in R_{(i)} \\ Q\in R_{(j)}}} a^{\ep\eta}_{PQR} \widehat{f}(P,\ep) \big( \La b\Ra_Q - \La b\Ra_P \big) h_Q^{\eta},
\end{equation}
whenever $(i, j) \neq (0, 0)$ and the dyadic shift is cancellative. Then, assuming $i \leq j$, we expressed $\Theta_b(\mbs^{ij})$ as:
$$\Theta_b(\mbs^{ij}) = \sum_{l=1}^j A_l - \sum_{m=1}^i B_m,$$
for some certain operators $A_l$, $B_m$. Using the weighted $\mch^1 - BMO$ duality statement in \eqref{E:NuH1BMODuality}, we showed that these operators satisfy:
\begin{align}
\label{E:Al}
\|A_l\|_{L^p(\mu;\: \lb)} & \leq [\nu]_{A_2} \|b\|_{BMO^2_{\mcd}(\nu)} 2^{-\frac{n}{2}(i+j-l)} \|M\|_{L^q(\lb')} \| \widetilde{S_{\mcd}}^{i, j-l} \|_{L^p(\mu)},  \\
\label{E:Bm}
\|B_m\|_{L^p(\mu;\: \lb)} & \leq [\nu]_{A_2} \|b\|_{BMO^2_{\mcd}(\nu)} 2^{-\frac{n}{2}(i+j-m)} \|M\|_{L^p(\mu)} \| \widetilde{S_{\mcd}}^{j, i-m} \|_{L^q(\lb')},  
\end{align}
for all integers $1 \leq l \leq j$ and $1 \leq m \leq j$, where $\widetilde{S_{\mcd}}^{i,j}$ is the shifted dyadic square function in \eqref{E:ShiftedDSFDef}. From here, \eqref{E:Comm1_2Wt} follows easily from \eqref{E:MaxOneWeight} and \eqref{E:ShiftedSF}.

Now, if we let $\mu = \lb = w \in A_2$,  \eqref{E:Al} becomes:
	\begin{align*}
	\|A_l\|_{L^2(w)} &\leq \|b\|_{BMO^2_{\mcd}} 2^{-\frac{n}{2}(i+j-l)} \|M\|_{L^2(w^{-1})} \|\widetilde{S_{\mcd}}^{i, j-l}\|_{L^2(w)} \\
	& \lesssim \|b\|_{BMO^2_{\mcd}} 2^{-\frac{n}{2}(i+j-l)} [w^{-1}]_{A_2} 2^{\frac{n}{2}(i+j-l)} [w]_{A_2}\\
	& = \|b\|_{BMO^2(\mcd)} [w]^2_{A_2}.
	\end{align*}
Similarly, we obtain $\|B_m\|_{L^2(w)} \lesssim \|b\|_{BMO^2_{\mcd}} [w]^2_{A_2}$ from \eqref{E:Bm}, and \eqref{E:Comm1_1Wt} follows. The proof for $i \geq j$ is symmetrical.


\subsection{The case $i = j = 0$.} As shown in \cite{HytRepOrig}, the non-cancellative shift $\mbs^{00}$ is of the form
	\begin{equation} \label{E:00HRT}
	\mbs^{00} = \mbs_c^{00} + \Pi_a + \Pi^*_d,
	\end{equation}
where $\mbs_c^{00}$ is a \textit{cancellative} shift with parameters $(0, 0)$, and $\Pi_a$, $\Pi^*_d$ are paraproducts with symbols $a, d \in BMO_{\mcd}$ and $\|a\|_{BMO_{\mcd}} \leq 1$, $\|d\|_{BMO_{\mcd}} \leq 1$.


In \cite[Section 5.2]{HLW2} we show that $\Theta_b(\mbs_c^{00}) = 0$ and
	\begin{align} 
	\Theta_b(\Pi_a) &= \Pi_a \Pi_b + \Pi_a \Gamma_b + \Lambda_{a, b} - \widetilde{\Lambda}_{a, b}, \label{E:Th_bPi_a}\\
		\Theta_b(\Pi^*_a) &= \Lambda_{b, a} - \Pi_b^* \Pi_a^* - \Gamma_b \Pi^*_a - \Pi_b \Pi^*_a, \label{E:Th_bPi*_a}
	\end{align}
where:
	\begin{align}
	\Lambda_{a,b}f &\defeq \sum_{Q\in\mcd; \ep,\eta\neq 1} \widehat{a}(Q,\ep) \sum_{R\in\mcd, R \supset Q} \widehat{b}(R,\eta) \widehat{f}(R,\eta) \frac{1}{|R|} h_Q^{\ep}, \label{E:Lb_abDef}\\
	\textup{and} \qquad 
		\widetilde{\Lambda}_{a,b} &\defeq \sum_{Q\in\mcd; \ep,\eta\neq 1} \widehat{a}(Q,\ep) \widehat{b}(Q,\eta) \widehat{f}(Q,\eta) \frac{1}{|Q|} h_Q^{\ep}.
	\end{align}
	We remark to the reader, that we are using a slightly different definition of $\Lambda_{a,b}$ than in \cite{HLW2}.  The $\Lambda_{a,b}$ defined in \eqref{E:Lb_abDef} corresponds to $\Lambda^*_{b,a}$ in \cite{HLW2}. However, we shall see later that it is more advantageous for our purposes to work with the definitions above.

We claim that:

\begin{lm} \label{L:Lambda_ab}
Let $\mu, \lb \in A_p$ for some $1<p<\infty$. Suppose $a \in BMO^2_{\mcd}$ and $b \in BMO^2_{\mcd}(\nu)$, where $\mcd$ is a fixed dyadic grid on $\mbr^n$ and $\nu = \mu^{\frac{1}{p}}\lb^{-\frac{1}{p}}$. If $T$ is any one of the operators:
	$$\Lambda_{a, b},\: \widetilde{\Lambda}_{a,b},\: \Lambda_{b, a},\: \widetilde{\Lambda}_{b, a},\: \Theta_b(\Pi_a) \text{, or } \Theta_b(\Pi^*_a),$$
then:
	\begin{equation} \label{E:Lb_2WtBd}
	\|T\|_{L^p(\mu;\: \lb)} \lesssim C(\mu, \lb, p) \|a\|_{BMO_{\mcd}^2} \|b\|_{BMO^2_{\mcd}(\nu)}.
	\end{equation}
In particular, if $\mu = \lb = w \in A_2$:
	\begin{equation} \label{E:Lb_1WtBd}
	\|T\|_{L^2(w)} \lesssim  \|a\|_{BMO^2_{\mcd}} \|b\|_{BMO^2_{\mcd}} [w]_{A_2}^2.
	\end{equation}
\end{lm}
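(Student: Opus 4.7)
The plan is to reduce the bounds on $\Theta_b(\Pi_a)$ and $\Theta_b(\Pi_a^*)$ via the decompositions \eqref{E:Th_bPi_a}, \eqref{E:Th_bPi*_a} to bounds on $\Lambda_{a,b}$ and $\widetilde{\Lambda}_{a,b}$ (the $(b,a)$-swapped versions follow by symmetric arguments with $a,b$ interchanged), and then prove those two core estimates by weighted $\mch^1$--$BMO$ duality. The paraproduct composition terms $\Pi_a\Pi_b$, $\Pi_a\Gamma_b$, $\Pi_b^*\Pi_a^*$, $\Gamma_b\Pi_a^*$, $\Pi_b\Pi_a^*$ appearing in the decompositions are each bounded by writing the operator as a $b$-paraproduct (bounded $L^p(\mu)\to L^p(\lb)$ via Theorem \ref{T:Paraprod2WtBds} with factor $\|b\|_{BMO^2_\mcd(\nu)}$) composed with an $a$-paraproduct (bounded on $L^p(\lb)$ or $L^q(\mu')$ with factor $\|a\|_{BMO^2_\mcd}$ by extrapolating \eqref{E:Paraprod1WtBds}). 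The one-weight specialization $\mu=\lb=w$, $p=2$ then gives $\|\Pi_a\Pi_b\|_{L^2(w)}\leq\|\Pi_a\|_{L^2(w)}\|\Pi_b\|_{L^2(w)}\lesssim\|a\|_{BMO^2_\mcd}\|b\|_{BMO^2_\mcd}[w]_{A_2}^2$, which matches the claimed exponent.

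For $\widetilde{\Lambda}_{a,b}$, dualizing against $g\in L^q(\lb')$ and factoring out the Haar coefficients of $b$ expresses the pairing as $\La b,\Phi\Ra$ with $\widehat{\Phi}(Q,\eta)=\widehat{f}(Q,\eta)|Q|^{-1}\sum_\ep\widehat{a}(Q,\ep)\widehat{g}(Q,\ep)$. Applying \eqref{E:NuH1BMODuality}, a pointwise Cauchy-Schwarz in $\ep$, the $BMO^2_\mcd$ bound $\sum_\ep|\widehat{a}(Q,\ep)|^2\leq\|a\|^2_{BMO^2_\mcd}|Q|$, and the observation that $\sum_\eta|\widehat{g}(Q,\eta)|^2/|Q|$ is a single summand of $(S_\mcd g)^2(x)$ when $x\in Q$, together yield the pointwise estimate $(S_\mcd\Phi)^2\lesssim\|a\|^2_{BMO^2_\mcd}(S_\mcd f)^2(S_\mcd g)^2$. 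H\"older-splitting $\nu=\mu^{1/p}\lb^{-1/p}$ with exponents $p,q$ converts this into $\|S_\mcd f\|_{L^p(\mu)}\|S_\mcd g\|_{L^q(\lb')}$, controlled by the one-weight square function bound \eqref{E:SFOneWeight}. Specializing to $\mu=\lb=w$, $p=2$ yields $\|S_\mcd f\|_{L^2(w)}\|S_\mcd g\|_{L^2(w^{-1})}\lesssim[w]_{A_2}^2\|f\|_{L^2(w)}\|g\|_{L^2(w^{-1})}$, producing the claimed $[w]_{A_2}^2$ factor.

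For $\Lambda_{a,b}$ the analogous dualization yields $\widehat{\Phi'}(R,\eta)=\widehat{f}(R,\eta)|R|^{-1}\sum_{Q\subseteq R,\ep}\widehat{a}(Q,\ep)\widehat{g}(Q,\ep)$, where the inner sum equals $\int_R(a-\La a\Ra_R)(g-\La g\Ra_R)\,dx$ via the restricted Haar projection $P_R^{\text{in}}u=(u-\La u\Ra_R)\unit_R$. The two-weight estimate for this cumulative sum is the technical heart of the argument; it was carried out in \cite{HLW2} via a careful H\"older/$\mch^1$--$BMO$-duality analysis, which I would import verbatim (with the minor bookkeeping change required to match the new $\Lambda_{a,b}$ convention announced after \eqref{E:Lb_abDef}). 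The main obstacle is exactly this cumulative-sum estimate, since naive pointwise bounds via Cauchy-Schwarz produce $(S_\mcd\Phi')^2\lesssim\|a\|^2_{BMO^2_\mcd}(S_\mcd f)^2\cdot M((S_\mcd g)^2)$, whose weighted $L^1(\nu)$-control is delicate for $p\neq 2$. For the one-weight specialization, however, tracking the argument with $\nu\equiv 1$ and $p=2$ shows that each invocation of \eqref{E:SFOneWeight} or \eqref{E:MaxOneWeight} contributes at most one factor of $[w]_{A_2}$, and the two such factors needed to close the estimate produce exactly $[w]_{A_2}^2$.
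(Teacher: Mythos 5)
Your plan for $\widetilde{\Lambda}_{a,b}$ (dualize, factor out the Haar coefficients of $b$, apply \eqref{E:NuH1BMODuality}, pointwise Cauchy--Schwarz, then H\"older to split $\nu$ into $\mu^{1/p}$ and $\lambda^{-1/p}=(\lambda')^{1/q}$) is sound and does give the correct $[w]_{A_2}^2$ in the one-weight case, because the localized nature of $\widetilde{\Lambda}$ lets you push the outer cube sum inside without any maximal function. Your treatment of the paraproduct-composition terms is also fine, and matches the paper.

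The problem is with $\Lambda_{a,b}$, and it is precisely the point the paper flags in the paragraph preceding the proof. You propose to ``import verbatim'' the $\mch^1$--$BMO$ argument from \cite{HLW2} and claim that tracking $\nu\equiv 1$, $p=2$ yields $[w]_{A_2}^2$ because ``the two such factors needed to close the estimate'' each contribute one $[w]_{A_2}$. This accounting is wrong: the cumulative sum $\sum_{Q\subsetneq R,\ep}\widehat a(Q,\ep)\widehat g(Q,\ep)$ forces a maximal-function averaging, and the original \cite{HLW2} estimate reduces to $\|S_\mcd\|_{L^p(\mu)}\,\|M\Pi^*_a\|_{L^q(\lb')}$, in which the paraproduct $\Pi^*_a$ contributes a \emph{third} factor of $[w]_{A_2}$ in the one-weight case ($\|S_\mcd\|_{L^2(w)}\lesssim[w]_{A_2}$, $\|M\|_{L^2(w^{-1})}\lesssim[w^{-1}]_{A_2}=[w]_{A_2}$, and $\|\Pi^*_a\|_{L^2(w^{-1})}\lesssim\|a\|_{BMO^2_\mcd}[w^{-1}]_{A_2}=\|a\|_{BMO^2_\mcd}[w]_{A_2}$), giving $[w]_{A_2}^3$. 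Since the one-weight conclusion \eqref{E:Lb_1WtBd} (hence the sharp Chung--Pereyra--P\'erez bound) is the entire point of the lemma, this is a genuine gap, not a bookkeeping issue.

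The paper closes this gap by a different and cleaner device: choose martingale transforms $T_\tau$, $T_\sigma$ with $\pm 1$ multipliers making all summands in $\La\Lambda_{a,b}f,g\Ra$ nonnegative, so that the inner tail sum is dominated by the full average $\La\Pi^*_{b_\sigma}f\Ra_Q$ and everything collapses to $\La\Pi^*_{a_\tau}g,\Pi^*_{b_\sigma}f\Ra$. This gives $\|\Lambda_{a,b}\|_{L^p(\mu;\lb)}\leq\|\Pi^*_{a_\tau}\|_{L^q(\lb')}\|\Pi^*_{b_\sigma}\|_{L^p(\mu;\lb)}$ --- only two paraproduct norms, no maximal function --- and Proposition \ref{P:BMOSigma} transfers the $BMO$ norms from $a_\tau,b_\sigma$ back to $a,b$ with no loss in the unweighted case, yielding $[w]_{A_2}^2$. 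You should replace the import-from-\cite{HLW2} step with this sign-choice argument (or an equivalent one avoiding the maximal function); without it the one-weight bound you claim does not follow.
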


Proposition \ref{P:Comm1} with $i = j = 0$ and $k=1$ follows from this immediately, by the assumptions on the $BMO$ norms of $a$ and $d$. Before we proceed with the proof, we remark that, while \eqref{E:Lb_2WtBd} was proved in some form for the $\Lambda$ operators in \cite{HLW2}, we need a slight modification of that proof in order for it to yield the one-weight result. Roughly speaking, the original proof boils down to  $\|S_{\mcd}\|_{L^p(\mu)} \|M \Pi_a^* \|_{L^q(\lb')}$, which in the one-weight case gives a factor of $[w]^3_{A_2}$.


\begin{proof}[Proof of Lemma \ref{L:Lambda_ab}]

It suffices to prove the result for the $\Lambda$ operators. For then, from the decomposition in \eqref{E:Th_bPi_a}:
\begin{align*}
\| \Theta_b(\Pi_a)\|_{L^p(\mu;\:\lb)} & \leq \|\Pi_a\|_{L^p(\lb)} \| \Pi_b + \Gamma_b \|_{L^p(\mu;\:\lb)} + \| \Lambda_{a, b} - \widetilde{\Lambda}_{a, b}\|_{L^p(\mu;\:\lb)}\\
	& \lesssim C(\mu, \lb, p) \|a\|_{BMO^2_{\mcd}} \|b\|_{BMO^2_{\mcd}(\nu)},
\end{align*}
where we used \eqref{E:Paraprod1WtBds} for the $\Pi_a$ term, and Theorem \ref{T:Paraprod2WtBds} for the paraproducts with symbol $b$. Similarly, we can see from \eqref{E:Th_bPi*_a} that $\Theta_b(\Pi^*_a)$ obeys the same bound. If $\mu=\lb=w\in A_2$:
$$ \|\Theta_b(\Pi_a)\|_{L^2(w)} \leq \|\Pi_a\|_{L^2(w)} \|\Pi_b + \Gamma_b\|_{L^2(w)} + \|\Lambda_{a, b} + \widetilde{\Lambda}_{a, b}\|_{L^2(w)}
	\lesssim \|a\|_{BMO^2_{\mcd}} \|b\|_{BMO^2_{\mcd}} [w]^2_{A_2},$$
and the same holds for $\Theta_b(\Pi^*_a)$.

Now let us look at $\Lambda_{a,b}$. Let $f \in L^p(\mu)$ and $g \in L^q(\lb')$. Then
	$$\La \Lambda_{a,b}f, g\Ra = \sum_{Q\in\mcd, \ep\neq 1} \widehat{a}(Q,\ep) \widehat{g}(Q,\ep) \sum_{R\in\mcd, R\supset Q; \eta\neq 1} \widehat{b}(R,\eta) \widehat{f}(R,\eta) \frac{1}{|R|}.$$
Let $a_{\tau} = T_{\tau}a$ and $b_{\sigma} = T_{\sigma}b$, where $T_{\tau}$ and $T_{\sigma}$ are martingale transforms with $\tau_{Q,\ep} = \pm 1$ and $\sigma_{Q, \ep} = \pm 1$ chosen for every pair $(Q\in\mcd, \ep\neq 1)$ such that 
	$$\tau_{Q,\ep}\widehat{a}(Q,\ep) \widehat{g}(Q,\ep) \geq 0 \text{, and } \sigma_{Q,\ep} \widehat{b}(Q,\ep)\widehat{f}(Q,\ep) \geq 0.$$	
Then
	\begin{align}
	\nonumber	\left| \La \Lambda_{a,b}f, g\Ra \right| & \leq \sum_{Q\in\mcd;\ep\neq 1} \widehat{a_{\tau}}(Q,\ep) \widehat{g}(Q,\ep) 
		\sum_{R\in\mcd, R\supset Q ; \eta\neq 1} \widehat{b_{\sigma}}(R,\eta) \widehat{f}(R,\eta) \frac{1}{|R|} \\
	\label{E:LbBd1}	&\leq \sum_{Q\in\mcd;\ep\neq 1} \widehat{a_{\tau}}(Q,\ep) \widehat{g}(Q,\ep) \La \Pi^*_{b_{\sigma}} f\Ra_{Q},
	\end{align}
where the last inequality follows from 	
	$$\La \Pi^*_{b_{\sigma}} f\Ra_Q = \sum_{P\in\mcd, P \subsetneq Q; \ep\neq 1} \widehat{b_{\sigma}}(P,\ep) \widehat{f}(P,\ep) \frac{1}{|Q|} +
		\sum_{R\in\mcd, R\supset Q; \eta\neq 1} \widehat{b_{\sigma}}(R,\eta) \widehat{f}(R,\eta) \frac{1}{|R|}.$$	
By the assumptions on $\sigma$ and $\tau$ and the Monotone Convergence Theorem, \eqref{E:LbBd1} becomes:
	\begin{align*}
	\sum_{Q\in\mcd,\ep\neq 1} \int \widehat{a_{\tau}}(Q,\ep) \widehat{g}(Q,\ep) (\Pi^*_{b_{\sigma}}f)(x) \frac{\unit_Q(x)}{|Q|}\,dx &= 
		\int \sum_{Q\in\mcd,\ep\neq 1} \widehat{a_{\tau}}(Q,\ep) \widehat{g}(Q,\ep) (\Pi^*_{b_{\sigma}}f)(x) \frac{\unit_Q(x)}{|Q|}\,dx\\
	&= \La \Pi^*_{a_{\tau}}g, \Pi^*_{b_{\sigma}}f \Ra. 
	\end{align*}
Therefore, by \eqref{E:Paraprod1WtBds} and Theorem \ref{T:Paraprod2WtBds},
	\begin{align}
	\label{E:LbL1} \| \Lambda_{a,b} \|_{L^p(\mu;\:\lb)}  & \leq  \|\Pi^*_{a_{\tau}}\|_{L^q(\lb')} \|\Pi^*_{b_{\sigma}}\|_{L^p(\mu;\: \lb)} \\
	\nonumber &\lesssim C(\mu, \lb, p) \|a_{\tau}\|_{BMO^2_{\mcd}} \|b_{\sigma}\|_{BMO^2_{\mcd}(\nu)}\\
	\nonumber & \lesssim C(\mu, \lb, p) \|a\|_{BMO^2_{\mcd}} \|b\|_{BMO^2_{\mcd}(\nu)},
	\end{align}
where the last inequality follows from Proposition \ref{P:BMOSigma}. Letting $\mu = \lb = w \in A_2$ in \eqref{E:LbL1}:
	$$ \| \Lambda_{a,b} \|_{L^2(w)} \lesssim \|a_{\tau}\|_{BMO^2_{\mcd}}[w]_{A_2} \|b_{\sigma}\|_{BMO^2_{\mcd}} [w]_{A_2} \lesssim \|a\|_{BMO^2_{\mcd}} \|b\|_{BMO^2_{\mcd}} [w]^2_{A_2},$$
which proves the result for $\Lambda_{a, b}$. An identical argument proves the result for $\widetilde{\Lambda}_{a,b}$.  As for $\Lambda_{b,a}$ or $\widetilde{\Lambda}_{b,a}$, the argument follows similarly, with a few modifications:
	$$\| \Lambda_{b, a}\|_{L^p(\mu;\:\lb)} \leq \|\Pi^*_{a_{\tau}}\|_{L^p(\mu)} \|\Pi^*_{b_{\sigma}}\|_{L^q(\lb';\: \mu')}.$$
\end{proof}


\section{The Second Iteration $\big[b, [b, \mbs^{ij}]\big]$} \label{S:SecondComm}

In this section we take a closer look at what happens when $k = 2$, and develop the rest of the tools we need for the general case. 
From \eqref{E:CommTh_bDec} and \eqref{E:Comm1_Dec}:
	$$C_b^2(\mbs^{ij}) = [b, C_b^1(\mbs^{ij})] = [\mfp_b, C_b^1(\mbs^{ij})] + \Theta_b\big( C_b^1(\mbs^{ij}) \big).$$
We show that each term obeys the bounds in Theorem \ref{T:BigThm_Sij}. The first term can be bounded using \eqref{E:Paraprod1WtBds} and Theorem \ref{T:BigThm_Sij} with $k = 1$.  In order to analyze the second term, we look at some simple properties of $\Theta_b$ that will be useful. Suppose $S$ and $T$ are some operators. Obviously $\Theta_b$ is linear, that is $\Theta_b(S + cT) = \Theta_b(S) + c\Theta_b(T)$. Moreover:
	\begin{equation} \label{E:Th_bComp}
	\Theta_b(ST) = \Theta_b(S)T + S\Theta_b(T).
	\end{equation}
To see this, note that we can write:
	$\Theta_b(ST)f = \Pi_{S(Tf)}b - S\Pi_{Tf}b + S\Pi_{Tf}b - ST\Pi_f b.$
In turn, this yields
	\begin{equation} \label{E:Th_bComm}
	\Theta_b \big( [S, T] \big) = [\Theta_b(S), T] + [S, \Theta_b(T)].
	\end{equation}
Then
	\begin{equation} \label{E:Comm2Temp1} 
	\Theta_b\big( C_b^1(\mbs^{ij})\big) = [\Theta_b(\mfp_b), \mbs^{ij}] + [\mfp_b, \Theta_b(\mbs^{ij})] + \Theta_b^2(\mbs^{ij}).
	\end{equation}
The second term in the expression is easily controlled using Proposition \ref{P:Comm1} with $k = 1$. For the first term, remark that
	\begin{equation} \label{E:Th_bGmb}
	\Theta_b(\Gamma_b) = 0,
	\end{equation}
which is easily seen by verifying that
	$$\Pi_{\Gamma_bf}b = \Gamma_b \Pi_f b = \sum_{R\in\mcd} \sum_{\ep,\eta\neq 1; \ep\neq \eta} \widehat{b}(R,\eta) \widehat{f}(R,\eta) \La b\Ra_{R} \frac{1}{\sqrt{|R|}} h_R^{\ep+\eta}.$$
So
	$\Theta_b(\mfp_b) = \Theta_b(\Pi_b) + \Theta_b(\Pi^*_b)$,
and both these terms can be bounded using Lemma \ref{L:Lambda_ab} with $a = b$. To analyze $\Theta_b^2(\mbs^{ij})$ and prove Proposition \ref{P:Comm1} for $k = 2$, we again need to look at the cancellative and non-cancellative cases separately.

\subsection{The Cancellative Case.} \label{Ss:Cb2_ij} Using the expression for $\Theta_b(\mbs^{ij})$ in \eqref{E:ThbSij}, we find that
	$$\Pi_{\Theta_b(\mbs^{ij})f} b = \sum_{R\in\mcd; \ep,\eta\neq 1} \sum_{P\in R_{(i)}, Q\in R_{(j)}} a^{\ep\eta}_{PQR} \widehat{f}(P, \ep) 
		\big( \La b\Ra_Q - \La b\Ra_P \big) \La b\Ra_Q h_Q^{\eta},$$
and
	$$\Theta_b(\mbs^{ij}) \Pi_f b = \sum_{R\in\mcd; \ep,\eta\neq 1} \sum_{P \in R_{(i)}, Q \in R_{(j)}} a^{\ep\eta}_{PQR} \widehat{f}(P, \ep)
		\big( \La b \Ra_Q - \La b \Ra_P \big) \La b \Ra_P h_Q^{\eta}.$$
Therefore
	$$\Theta_b^2 (\mbs^{ij})f = \sum_{R\in\mcd; \ep,\eta\neq 1} \sum_{P\in R_{(i)}, Q\in R_{(j)}} a^{\ep\eta}_{PQR} \widehat{f}(P,\ep) 
		\big( \La b\Ra_Q - \La b\Ra_P \big)^2 h_Q^{\eta}.$$
Now consider the operator 
	$$U_{(j)}f \defeq \sum_{Q\in\mcd; \eta\neq 1} \big( \La b\Ra_Q - \La b\Ra_{Q^{(j)}} \big) \widehat{f}(Q,\eta) h_Q^{\eta},$$
for a non-negative integer $j$. Then
	$$U_{(j)} \Theta_b(\mbs^{ij}) f = \sum_{R\in\mcd; \ep,\eta\neq 1} \sum_{P \in R_{(i)}, Q\in R_{(j)}} a^{\ep\eta}_{PQR} \widehat{f}(P, \ep) 
		\big( \La b\Ra_Q - \La b\Ra_P \big) \big( \La b\Ra_Q - \La b\Ra_R \big) h_Q^{\eta},$$
and
	$$\Theta_b(\mbs^{ij}) U_{(i)} f = \sum_{R\in\mcd; \ep,\eta\neq 1} \sum_{P \in R_{(i)}, Q\in R_{(j)}} a^{\ep\eta}_{PQR} \widehat{f}(P, \ep)
		\big( \La b\Ra_Q - \La b\Ra_P \big) \big( \La b\Ra_P - \La b\Ra_R \big) h_Q^{\eta}.$$ 
So 
	$$\Theta_b^2(\mbs^{ij}) = U_{(j)} \Theta_b(\mbs^{ij}) - \Theta_b(\mbs^{ij}) U_{(i)}.$$
We claim that for any $A_2$ weight $w$:
	\begin{equation} \label{E:UjBound}
	\left\| U_{(j)} \right\|_{L^2(w)} \lesssim j [w]_{A_2} \|b\|_{BMO^2_{\mcd}}.
	\end{equation}
To see this, remark that
	$$ \left| \La b\Ra_Q - \La b\Ra_{Q^{(j)}} \right| \leq j 2^n \|b\|_{BMO^2_{\mcd}},$$
so $U_{(j)}$ can be expressed as 
	$$U_{(j)} = j 2^n \|b\|_{BMO^2_{\mcd}} T_{\sigma},$$
where $T_{\sigma}$ is a martingale transform. Then \eqref{E:UjBound} follows from \eqref{E:MartTBd}. Finally, this and Proposition \ref{P:Comm1} with $k=1$ give that
	\begin{align}
	\left\| \Theta_b^2(\mbs^{ij}) \right\|_{L^p(\mu;\:\lb)} & \leq \|\Theta_b(\mbs^{ij}) \|_{L^p(\mu;\:\lb)} \big( \|U_{(i)}\|_{L^p(\mu)} + \|U_{(j)}\|_{L^p(\lb)} \big) \\
		&\lesssim \kappa^2_{ij} C(\mu, \lb, p) \|b\|_{BMO^2_{\mcd}(\nu)} \|b\|_{BMO^2_{\mcd}},
	\end{align}
and, in the one-weight case,
	$$\left\| \Theta_b^2(\mbs^{ij}) \right\|_{L^2(w)} \lesssim \kappa_{ij}^2 \|b\|^2_{BMO^2_{\mcd}} [w]^3_{A_2}.$$

\subsection{The case $i = j = 0$.} \label{Ss:Cb2_00}

From \eqref{E:00HRT}:
	$$\Theta_b^2(\mbs^{00}) = \Theta_b^2(\Pi_a) + \Theta_b^2(\Pi^*_{d}).$$
Using the expression in \eqref{E:Th_bPi_a} and the properties of $\Theta_b$ in \eqref{E:Th_bComp} and \eqref{E:Th_bGmb}:
	\begin{equation} \label{E:Cb00Temp1}
	\Theta_b^2(\Pi_a) = \Theta_b(\Pi_a) \Pi_b + \Pi_a \Theta_b(\Pi_b) + \Theta_b(\Pi_a) \Gamma_b + \Theta_b(\Lambda_{a,b}) - \Theta_b(\widetilde{\Lambda}_{a, b}).
	\end{equation}
Lemma \ref{L:Lambda_ab} and the paraproduct norms immediately control the first three terms, showing that their norms as operators $L^p(\mu) \to L^p(\lb)$ are bounded (up to a constant) by 
$\|a\|_{BMO^2_{\mcd}} \|b\|_{BMO^2_{\mcd}} \|b\|_{BMO^2_{\mcd}(\nu)}$, and their norms as operators $L^2(w) \to L^2(w)$ are bounded (up to a constant) by 
$\|a\|_{BMO^2_{\mcd}} \|b\|^2_{BMO^2_{\mcd}} [w]^3_{A_2}$. For the last two terms, we look at an interesting property of the $\Lambda_{a,b}$ operators:


\begin{prop} \label{P:Switch}
For some locally integrable functions $a$, $b$, $c$, the operator $\Lambda_{a,b}$ satisfies:
	\begin{align}
	\label{E:Switch} \Theta_c(\Lambda_{a,b}) &= \Pi_a \Lambda_{c,b}    \\
	\text{and }  \Theta_c(\widetilde{\Lambda}_{a,b}) &= 0.
	\end{align}
\end{prop}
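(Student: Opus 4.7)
The plan is to prove both identities by Haar-coefficient computation. For the first identity, observe that the coefficient of $h_Q^\ep$ in $\Lambda_{a,b}f$ is $\widehat{a}(Q,\ep)\,\beta_Q$, where $\beta_Q \defeq \sum_{R\supset Q,\,\eta\neq 1}\widehat{b}(R,\eta)\widehat{f}(R,\eta)/|R|$ depends only on the cube $Q$ (not on $\ep$). Substituting this into $\Pi_{\Lambda_{a,b}f}c$ produces a factor $\La c\Ra_Q$ at $(Q,\ep)$, while $\Lambda_{a,b}(\Pi_f c)$---after using $\widehat{\Pi_f c}(R,\eta) = \widehat{f}(R,\eta)\La c\Ra_R$---produces factors $\La c\Ra_R$ for each $R\supset Q$. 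Taking the difference, the coefficient of $h_Q^\ep$ in $\Theta_c(\Lambda_{a,b})f$ equals
\[
\widehat{a}(Q,\ep)\sum_{R\supset Q,\,\eta\neq 1}\widehat{b}(R,\eta)\widehat{f}(R,\eta)\,\frac{\La c\Ra_Q - \La c\Ra_R}{|R|}.
\]

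Next I would apply the telescoping identity $\La c\Ra_Q - \La c\Ra_R = \sum_{P:\,Q\subsetneq P\subseteq R,\,\ep'\neq 1}\widehat{c}(P,\ep')\,h_P^{\ep'}(Q)$, valid whenever $Q\subsetneq R$ in $\mcd$. This follows from the formula $\La c\Ra_Q = \sum_{P\supsetneq Q,\,\ep'\neq 1}\widehat{c}(P,\ep')h_P^{\ep'}(Q)$ recalled in Section~\ref{S:BandN}, together with the fact that any cancellative $h_P^{\ep'}$ is constant on a proper subcube of $P$, so the ancestors $P\supsetneq R$ contribute identically to $\La c\Ra_Q$ and $\La c\Ra_R$ and cancel. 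Substituting into the display above and swapping the order of summation---$P$ outermost with $P\supsetneq Q$, then $R\supseteq P$---the inner $R$-sum reassembles to $\beta_P$, so the coefficient of $h_Q^\ep$ becomes
\[
\widehat{a}(Q,\ep)\sum_{P\supsetneq Q,\,\ep'\neq 1}\widehat{c}(P,\ep')\,\beta_P\,h_P^{\ep'}(Q).
\]
Since $\Lambda_{c,b}f$ has Haar coefficient $\widehat{c}(P,\ep')\,\beta_P$ at $(P,\ep')$ and cancellative Haar functions supported on $P\subseteq Q$ have zero average on $Q$, the inner bracket is precisely $\La\Lambda_{c,b}f\Ra_Q$. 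Reading off the outer paraproduct identifies the whole expression as $\Pi_a(\Lambda_{c,b}f)$, finishing the first identity.

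The second identity is immediate from the definitions. The coefficient of $h_Q^\ep$ in $\widetilde{\Lambda}_{a,b}f$ is $\widehat{a}(Q,\ep)\sum_{\eta\neq 1}\widehat{b}(Q,\eta)\widehat{f}(Q,\eta)/|Q|$, so both $\Pi_{\widetilde{\Lambda}_{a,b}f}c$ and $\widetilde{\Lambda}_{a,b}(\Pi_f c)$ contribute the same coefficient $\widehat{a}(Q,\ep)\sum_{\eta\neq 1}\widehat{b}(Q,\eta)\widehat{f}(Q,\eta)\La c\Ra_Q/|Q|$ to $h_Q^\ep$---one through the paraproduct factor $\La c\Ra_Q$, the other through $\widehat{\Pi_f c}(Q,\eta) = \widehat{f}(Q,\eta)\La c\Ra_Q$---and their difference vanishes termwise. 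The main obstacle in the whole argument is just the careful bookkeeping of strict versus non-strict containment in the nested cube sums and the summation swap; convergence causes no trouble since one can work with $f$ of finite Haar expansion and extend by density.
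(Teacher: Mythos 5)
Your proof is correct and essentially coincides with the paper's: both reduce to showing that the $(Q,\ep)$-Haar coefficient of $\Theta_c(\Lambda_{a,b})f$ equals $\widehat{a}(Q,\ep)\,\La\Lambda_{c,b}f\Ra_Q$ by computing the bracket $\sum_{R\supset Q,\eta\neq 1}\widehat{b}(R,\eta)\widehat{f}(R,\eta)\big(\La c\Ra_Q-\La c\Ra_R\big)/|R|$ and recognizing it via the telescoping Haar expansion of $\La c\Ra_Q-\La c\Ra_R$ together with the average formula $\La g\Ra_Q=\sum_{P\supsetneq Q,\ep'\neq 1}\widehat{g}(P,\ep')h_P^{\ep'}(Q)$. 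The only cosmetic difference is ordering: the paper records the identity for $\La\Lambda_{a,b}f\Ra_Q$ first (its equation \eqref{E:SwitchTemp1}) and then matches the bracket to it, whereas you unwind the bracket at the end by expanding, swapping the $P$- and $R$-sums, and reassembling $\beta_P$; the second identity is argued identically in both.
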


\begin{proof}
To prove the first statement, note that
	\begin{align}
	\nonumber \La \Lambda_{a,b}f\Ra_Q &= 	\sum_{\substack{Q,R\in\mcd; R \supsetneq Q \\ \eta\neq 1}} \widehat{b}(R,\eta) \widehat{f}(R,\eta) \frac{1}{|R|} \sum_{\substack{P\in\mcd; Q \subsetneq P\subset R \\ \ep\neq 1}} \widehat{a}(P,\ep) h_P^{\ep}(Q)\\
	 \label{E:SwitchTemp1}
	 &= \sum_{\substack{Q,R\in\mcd; R \supsetneq Q \\ \eta\neq 1}} \widehat{b}(R,\eta) \widehat{f}(R,\eta) \frac{1}{|R|} \left( \La a\Ra_Q - \La a\Ra_R \right).
	\end{align}
A quick calculation shows that
	$$\Theta_c(\Lambda_{a,b}) f = \sum_{\substack{Q\in\mcd \\ \ep\neq 1}} \widehat{a}(Q,\ep)
		\left[ \sum_{\substack{Q,R\in\mcd; R \supset Q\\ \eta\neq 1}} \widehat{b}(R,\eta) \widehat{f}(R,\eta) \frac{1}{|R|} \big( \La c\Ra_Q - \La c\Ra_R \big) \right] h_Q^{\ep}.$$
From \eqref{E:SwitchTemp1}, we recognize the term in parentheses as $\La \Lambda_{c,b} f \Ra_Q$, and so
	$$\Theta_c(\Lambda_{a,b}) f = \sum_{Q\in\mcd, \ep\neq 1} \widehat{a}(Q,\ep) \La \Lambda_{c,b} f \Ra_Q h_Q^{\ep}  = \Pi_a \Lambda_{c,b}f.$$
The second statement follows by
	$$\Pi_{\widetilde{\Lambda}_{a,b}f}c = \widetilde{\Lambda}_{a,b}\Pi_f c = \sum_{Q\in\mcd; \ep,\eta\neq 1} \widehat{a}(Q,\ep) \widehat{b}(Q,\eta) \widehat{f}(Q,\eta) \frac{1}{|Q|} \La c\Ra_Q h_Q^{\ep}.$$
\end{proof}


Returning to \eqref{E:Cb00Temp1}, we can now see that the last two terms in the expression become simply $\Pi_a \Lambda_{b,b}$, which is controlled exactly as the other terms. The result for $\Theta_b^2(\Pi^*_a)$ follows similarly, after noting that $\Theta_b(\Lambda_{b,a}) = \Pi_b \Lambda_{b, a}$. Finally, recall the assumptions on the $BMO$ norms of $a$ and $d$ in \eqref{E:00HRT} and see that the results in this section prove Proposition \ref{P:Comm1} for $k = 2$ and $i = j = 0$.


\section{The General Case of Higher Iterations} \label{S:General}

In this section we prove Theorem \ref{T:BigThm_Sij}. A closer look at recursively expanding the formula:
	$$C_b^{m+1}(T) = [\mfp_b, C_b^m(T)] + \Theta_b\big( C_b^m(T) \big),$$
for some operator $T$, shows that, in order to control $C_b^{m+1}(T)$, we need not only bound the previous iterations $C_b^{m+1-k}(T)$, but really
	$$C_b^{m-k}(T),\: \Theta_b\big( C_b^{m-k}(T) \big),\: \Theta^2_b\big( C_b^{m-k}(T) \big), \ldots, \Theta^k_b\big( C_b^{m-k}(T) \big),$$
for every $0 \leq k \leq m-1$. So, it makes sense to instead prove the following more general statement:

\begin{thm} \label{T:BIGTHM_Shifts}
Under the same assumptions as Theorem \ref{T:BigThm_Sij}, for any integer $k \geq 1$:
\begin{equation} \label{ThbM_2Wt}
\left\| \Theta_b^M\big( C_b^k(\mbs^{ij}) \big) \right\|_{L^p(\mu;\:\lb)} \leq c \kappa_{ij}^{M+k} \|b\|_{BMO^2_{\mcd}}^{M+k-1} \|b\|_{BMO^2_{\mcd}(\nu)} \text{, for all } M \geq 0,
\end{equation}
where $c$ is a constant depending on $n$, $\mu$, $\lb$, $p$, $M$, and $k$. In particular, if $\mu = \lb = w \in A_2$,
\begin{equation} \label{ThbM_1Wt}
\left\| \Theta_b^M\big( C_b^k(\mbs^{ij}) \big) \right\|_{L^2(w)} \leq c \kappa_{ij}^{M+k} \|b\|^{M+k}_{BMO^2_{\mcd}} [w]_{A_2}^{M+k+1}  \text{, for all } M \geq 0,
\end{equation}
where $c$ is a constant depending on $n$, $M$, and $k$.
\end{thm}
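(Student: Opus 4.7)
The plan is to induct on $k \geq 1$, establishing both bounds simultaneously for all $M \geq 0$ at each stage. Throughout I use the iterated commutator Leibniz rule $\Theta_b^M([S,T]) = \sum_{r=0}^M \binom{M}{r}[\Theta_b^r(S), \Theta_b^{M-r}(T)]$, which follows by induction from \eqref{E:Th_bComm}, together with the decomposition
\begin{equation*}
\Theta_b^M(C_b^k(\mbs^{ij})) = \sum_{r=0}^M \binom{M}{r}\bigl[\Theta_b^r(\mfp_b), \Theta_b^{M-r}(C_b^{k-1}(\mbs^{ij}))\bigr] + \Theta_b^{M+1}(C_b^{k-1}(\mbs^{ij})),
\end{equation*}
obtained by applying $\Theta_b^M$ to \eqref{E:CommTh_bDec}. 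For this to close we need two preliminary estimates: bounds on $\Theta_b^r(\mfp_b)$ for all $r \geq 0$, and bounds on $\Theta_b^M(\mbs^{ij})$ for all $M \geq 1$, which together cover the base case $k = 1$ (the general form of Proposition \ref{P:Comm1}).

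For the first preliminary I would prove by structural induction on $r$ that $\Theta_b^r(\mfp_b)$ is a finite linear combination of composition products of operators drawn from the finite set $\{\Pi_b, \Pi_b^*, \Gamma_b, \Lambda_{b,b}, \widetilde{\Lambda}_{b,b}\}$, in which the total symbol count (with $\Lambda$-type factors counted twice) equals $r + 1$. The base case $r = 0$ is $\mfp_b = \Pi_b + \Pi_b^* + \Gamma_b$; the inductive step uses $\Theta_b(\Gamma_b) = 0$, the formulas \eqref{E:Th_bPi_a}--\eqref{E:Th_bPi*_a} with $a = b$, Proposition \ref{P:Switch}, and the Leibniz rule \eqref{E:Th_bComp} to verify that $\Theta_b$ of any factor stays inside this class and increases the count by one. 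Bounding each product $T_1 \cdots T_N$ along the chain $L^p(\mu) \to \cdots \to L^p(\mu) \to L^p(\lb)$ via \eqref{E:Paraprod1WtBds} and the diagonal case of Lemma \ref{L:Lambda_ab} for the interior factors, and Theorem \ref{T:Paraprod2WtBds} or the full Lemma \ref{L:Lambda_ab} for the leftmost factor, deposits exactly one $\|b\|_{BMO^2_{\mcd}(\nu)}$ and $r$ copies of $\|b\|_{BMO^2_{\mcd}}$ (respectively $\|b\|_{BMO^2_{\mcd}}^{r+1} [w]_{A_2}^{r+1}$ in the one-weight case). An entirely analogous argument, with an off-symbol paraproduct $\Pi_a$ or $\Pi_a^*$ occupying one factor, bounds $\Theta_b^r(\Pi_a)$ and $\Theta_b^r(\Pi_a^*)$ for $r \geq 1$ by $C \|a\|_{BMO^2_{\mcd}} \|b\|^{r-1}_{BMO^2_{\mcd}} \|b\|_{BMO^2_{\mcd}(\nu)}$.

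For the second preliminary, I split on cancellative versus non-cancellative. In the cancellative case $(i,j) \neq (0,0)$, a quick induction shows
\begin{equation*}
\Theta_b^M(\mbs^{ij})f = \sum_{R;\, P \in R_{(i)}, Q \in R_{(j)};\, \ep, \eta \neq 1} a^{\ep\eta}_{PQR} \widehat{f}(P,\ep) \bigl(\La b\Ra_Q - \La b\Ra_P\bigr)^M h_Q^{\eta},
\end{equation*}
from which the recursion $\Theta_b^M(\mbs^{ij}) = U_{(j)} \Theta_b^{M-1}(\mbs^{ij}) - \Theta_b^{M-1}(\mbs^{ij}) U_{(i)}$ from Section \ref{Ss:Cb2_ij} continues to hold; combined with \eqref{E:UjBound} (extrapolated to $L^p$) this yields the claimed bound inductively. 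For $(i,j) = (0,0)$, the decomposition \eqref{E:00HRT} together with $\Theta_b(\mbs_c^{00}) = 0$ reduces everything to $\Theta_b^M(\Pi_a) + \Theta_b^M(\Pi^*_d)$, handled by the off-symbol version of the first preliminary using $\|a\|_{BMO_{\mcd}}, \|d\|_{BMO_{\mcd}} \leq 1$; note $\kappa_{00} = 1$ so no $\kappa_{ij}$ factor is lost.

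The outer induction on $k$ is now straightforward. The base case $k = 1$ is handled directly by the two preliminaries applied to the decomposition above with $C_b^{k-1}(\mbs^{ij})$ replaced by $\mbs^{ij}$. For $k \geq 2$, the term $\Theta_b^{M+1}(C_b^{k-1}(\mbs^{ij}))$ is covered by the inductive hypothesis at $(M+1, k-1)$, while each summand in the Leibniz expansion pairs the first preliminary (for $\Theta_b^r(\mfp_b)$) with the inductive hypothesis at $(M-r, k-1)$; extrapolation is used to supply the $L^p(\mu)$ and $L^p(\lb)$ operator norms of the second factor. A count of $\|b\|$ factors shows that each term has exactly one $\|b\|_{BMO^2_{\mcd}(\nu)}$ and $M + k - 1$ copies of $\|b\|_{BMO^2_{\mcd}}$, multiplied by $\kappa_{ij}^{M+k}$ (up to constants), giving the two-weight bound, and analogously $\|b\|_{BMO^2_{\mcd}}^{M+k} [w]_{A_2}^{M+k+1}$ in the one-weight case. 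The main obstacle is the structural bookkeeping in the first preliminary—ensuring that the small algebra generated by $\{\Pi_b, \Pi_b^*, \Gamma_b, \Lambda_{b,b}, \widetilde{\Lambda}_{b,b}\}$ is preserved under $\Theta_b$ and that the unique $\|b\|_{BMO^2_{\mcd}(\nu)}$ factor is produced correctly in every term, with tracking of $[w]_{A_2}$ powers through the extrapolation steps as a secondary concern.
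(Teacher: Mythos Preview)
Your proposal is correct and follows essentially the same route as the paper: the same outer induction on $k$, the same Leibniz-type binomial formula \eqref{E:BinomComm} for $\Theta_b^M$ applied to commutators, and the same two preliminary ingredients (bounds on $\Theta_b^r(\mfp_b)$ and on $\Theta_b^M(\mbs^{ij})$, the latter handled via the $U_{(i)}$, $U_{(j)}$ recursion for cancellative shifts and the paraproduct decomposition for $\mbs^{00}$). The only cosmetic difference is that the paper packages your first preliminary as Proposition~\ref{P:Th_b^k(P,Lb)}, proved by strong induction using the product binomial formula \eqref{E:BinomComp}, rather than by your explicit structural tracking of the operator algebra generated by $\{\Pi_b,\Pi_b^*,\Gamma_b,\Lambda_{b,b},\widetilde{\Lambda}_{b,b}\}$; the two arguments are equivalent, and your one caveat (that the two-weight transition must be placed on a $b$-factor, not necessarily the leftmost factor, in the off-symbol case) is easily arranged since each term contains at most one $a$-factor.
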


Theorem \ref{T:BigThm_Sij} will then follow as a special case of the above result, with $M = 0$. We begin by completing the proof of Proposition \ref{P:Comm1}.


\begin{proof}[Proof of Proposition \ref{P:Comm1}]
So far, this result has been proved for $k = 1$ and $k = 2$. In case $(i,j) \neq (0,0)$, we generalize the argument in Section \ref{Ss:Cb2_ij}. We claim that for all $k \geq 2$ and $(i,j) \neq (0,0)$:
	\begin{align}
	\label{E:Temp1} \Theta_b^k(\mbs^{ij}) f &= \sum_{R\in\mcd; \ep,\eta\neq 1} \sum_{P\in R_{(i)}, Q\in R_{(j)}} a^{\ep\eta}_{PQR} \widehat{f}(P,\ep) 
		\big(\La b\Ra_Q - \La b\Ra_P\big)^k h_Q^{\eta}\\
	\label{E:Temp2} &= U_{(j)} \Theta_b^{k-1}(\mbs^{ij}) - \Theta_b^{k-1} (\mbs^{ij})U_{(i)}.
	\end{align}	
Then, assuming Proposition \ref{P:Comm1} holds for some $k\geq 1$, the result for $k+1$ follows from \eqref{E:UjBound}.
To see this, assume \eqref{E:Temp1} holds for some $k \geq 2$. Then
	$$\Pi_{\Theta^k_b(\mbs^{ij})f} b = \sum_{R\in\mcd; \ep,\eta\neq 1} \sum_{P\in R_{(i)}, Q\in R_{(j)}} a^{\ep\eta}_{PQR} \widehat{f}(P, \ep) 
		\big( \La b\Ra_Q - \La b\Ra_P \big)^k \La b\Ra_Q h_Q^{\eta},$$
and
	$$\Theta^k_b(\mbs^{ij}) \Pi_f b = \sum_{R\in\mcd; \ep,\eta\neq 1} \sum_{P \in R_{(i)}, Q \in R_{(j)}} a^{\ep\eta}_{PQR} \widehat{f}(P, \ep)
		\big( \La b \Ra_Q - \La b \Ra_P \big)^k \La b \Ra_P h_Q^{\eta}.$$
Since $\Theta_b^{k+1}(\mbs^{ij})f = \Pi_{\Theta^k_b(\mbs^{ij})f} b - \Theta^k_b(\mbs^{ij}) \Pi_f b$, we see that \eqref{E:Temp1} holds for $k+1$. Similarly,
	$$U_{(j)} \Theta^k_b(\mbs^{ij}) f = \sum_{R\in\mcd; \ep,\eta\neq 1} \sum_{P \in R_{(i)}, Q\in R_{(j)}} a^{\ep\eta}_{PQR} \widehat{f}(P, \ep) 
		\big( \La b\Ra_Q - \La b\Ra_P \big)^k \big( \La b\Ra_Q - \La b\Ra_R \big) h_Q^{\eta},$$
and
	$$\Theta^k_b(\mbs^{ij}) U_{(i)} f = \sum_{R\in\mcd; \ep,\eta\neq 1} \sum_{P \in R_{(i)}, Q\in R_{(j)}} a^{\ep\eta}_{PQR} \widehat{f}(P, \ep)
		\big( \La b\Ra_Q - \La b\Ra_P \big)^k \big( \La b\Ra_P - \La b\Ra_R \big) h_Q^{\eta},$$
from which  \eqref{E:Temp2} with $k+1$ follows.

For the case $i = j = 0$,
	$$\Theta_b^k(\mbs^{00}) = \Theta_b^k(\Pi_a) + \Theta_b^k(\Pi^*_d),$$
with $\|a\|_{BMO^2_{\mcd}} \lesssim 1$ and $\|d\|_{BMO^2_{\mcd}} \lesssim 1$. Proposition \ref{P:Comm1} for $i = j =0$ therefore follows trivially from the next result.
\end{proof}



\begin{prop} \label{P:Th_b^k(P,Lb)}
Under the same assumptions as Theorem \ref{T:BigThm_Sij}, let $P_a$ denote either one of the operators $\Pi_a$ and $\Pi^*_a$, and $\Lambda$ denote either one of the operators $\Lambda_{a,b}$ or $\Lambda_{b, a}$. Then for any integer $k \geq 1$:
	\begin{equation} \label{E:Thbk(P)2Wt}
		\left\| \Theta_b^k(P_a) \right\|_{L^p(\mu;\:\lb)} \leq c \|a\|_{BMO^2_{\mcd}} \|b\|^{k-1}_{BMO^2_{\mcd}} \|b\|_{BMO^2_{\mcd}(\nu)},
	\end{equation}
	\begin{equation} \label{E:Thbk(L)2Wt}
		\left\| \Theta_b^k (\Lambda) \right\|_{L^p(\mu;\:\lb)} \leq c \|a\|_{BMO^2_{\mcd}} \|b\|^k_{BMO^2_{\mcd}} \|b\|_{BMO^2_{\mcd}(\nu)},
	\end{equation}
where $c$ is a constant depending on $n$, $p$, $\mu$, $\lb$, and $k$. In particular, if $\mu = \lb = w \in A_2$:
	\begin{equation} \label{E:Thbk(P)1Wt}
		\left\| \Theta_b^k(P_a) \right\|_{L^2(w)} \leq c \|a\|_{BMO^2_{\mcd}} \|b\|^k_{BMO^2_{\mcd}} [w]^{k+1}_{A_2},
	\end{equation}
	\begin{equation} \label{E:Thbk(L)1Wt}
		\left\| \Theta_b^k (\Lambda) \right\|_{L^2(w)} \leq c \|a\|_{BMO^2_{\mcd}} \|b\|^{k+1}_{BMO^2_{\mcd}} [w]^{k+2}_{A_2},
	\end{equation}
where $c$ is a constant depending on $n$ and $k$.
\end{prop}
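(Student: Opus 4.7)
The plan is to prove \eqref{E:Thbk(P)2Wt}--\eqref{E:Thbk(L)1Wt} by simultaneous strong induction on $k\geq 1$. The base case $k=1$ reduces to Lemma \ref{L:Lambda_ab}: $\Theta_b(P_a)$ is already expressed via \eqref{E:Th_bPi_a}--\eqref{E:Th_bPi*_a} as a finite sum of operators whose norms are controlled there, while Proposition \ref{P:Switch} gives $\Theta_b(\Lambda_{a,b})=\Pi_a\Lambda_{b,b}$ and, by the same identity with the roles of $a,b$ exchanged, $\Theta_b(\Lambda_{b,a})=\Pi_b\Lambda_{b,a}$, so these bounds split as a paraproduct norm times a $\Lambda$-norm from Lemma \ref{L:Lambda_ab}.

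For the inductive step on $P_a$, I would write $\Theta_b^k(P_a)=\Theta_b^{k-1}\bigl(\Theta_b(P_a)\bigr)$, expand $\Theta_b(P_a)$ using \eqref{E:Th_bPi_a} or \eqref{E:Th_bPi*_a} into a fixed number of summands of the form $P_aQ_b$, $Q_bP_a$, $\pm\Lambda_{a,b}$, $\pm\Lambda_{b,a}$, or $\pm\widetilde{\Lambda}_{\cdot,\cdot}$ (where $Q_b\in\{\Pi_b,\Pi^*_b,\Gamma_b\}$), and then apply $\Theta_b^{k-1}$ to each summand using the iterated Leibniz rule
\begin{equation*}
\Theta_b^{k-1}(ST)=\sum_{j=0}^{k-1}\binom{k-1}{j}\,\Theta_b^j(S)\,\Theta_b^{k-1-j}(T),
\end{equation*}
which follows from \eqref{E:Th_bComp} by an easy induction. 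The vanishing identities $\Theta_b(\Gamma_b)=0$ and $\Theta_b(\widetilde{\Lambda}_{\cdot,\cdot})=0$ kill the vast majority of summands, leaving only compositions of the form $\Theta_b^j(P_c)\,\Theta_b^{k-1-j}(Q_b)$ (with $c\in\{a,b\}$) together with $\Theta_b^{k-1}(\Lambda_{a,b})$, which is handled by the $\Lambda$-half of the induction. Each factor is then bounded by either the inductive hypothesis, or Theorem \ref{T:Paraprod2WtBds}, \eqref{E:Paraprod1WtBds}, or Lemma \ref{L:Lambda_ab}, placing exactly one factor in the two-weight norm $L^p(\mu)\to L^p(\lambda)$ and putting the remaining factors into one-weight norms on $L^p(\mu)$ or $L^p(\lambda)$ (extrapolated from $L^2$ as stated at the end of Section \ref{S:BandN}).

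The $\Lambda$-case is analogous: Proposition \ref{P:Switch} yields $\Theta_b^k(\Lambda_{a,b})=\Theta_b^{k-1}(\Pi_a\Lambda_{b,b})$ and $\Theta_b^k(\Lambda_{b,a})=\Theta_b^{k-1}(\Pi_b\Lambda_{b,a})$, and the same Leibniz expansion reduces matters to products $\Theta_b^j(\Pi_c)\,\Theta_b^{k-1-j}(\Lambda')$ covered by the inductive hypothesis in both the two- and one-weight settings. The one-weight branch of the induction is propagated in parallel, each paraproduct or $\Lambda$-factor contributing one extra $[w]_{A_2}$, which accounts for the exponents $k+1$ and $k+2$ in \eqref{E:Thbk(P)1Wt} and \eqref{E:Thbk(L)1Wt} respectively.

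The main obstacle is the bookkeeping: verifying that in every surviving term of the Leibniz expansion the total number of $\|b\|_{BMO^2_{\mcd}}$ factors, the single $\|b\|_{BMO^2_{\mcd}(\nu)}$ (or $[w]_{A_2}$) factor, and the lone $\|a\|_{BMO^2_{\mcd}}$ assemble into exactly the target power count. This is best seen on the representative terms $\Pi_a\,\Theta_b^{k-1}(\Pi_b)$, $\Theta_b^{k-1}(\Pi_a)\,\Pi_b$, and $\Theta_b^j(\Pi_a)\,\Theta_b^{k-1-j}(\Lambda_{b,b})$: each produces precisely $\|a\|_{BMO^2_{\mcd}}\|b\|^{k-1}_{BMO^2_{\mcd}}\|b\|_{BMO^2_{\mcd}(\nu)}$ in the two-weight setting and $\|a\|_{BMO^2_{\mcd}}\|b\|^{k}_{BMO^2_{\mcd}}[w]^{k+1}_{A_2}$ in the one-weight setting, after invoking the inductive hypothesis together with \eqref{E:Paraprod1WtBds}. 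The remaining cases follow the same template with the indices shifted by one for the $\Lambda$-bounds.
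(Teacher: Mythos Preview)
Your proposal is correct and follows essentially the same route as the paper: strong induction on $k$, the base case via Lemma \ref{L:Lambda_ab} and Proposition \ref{P:Switch}, and the inductive step by writing $\Theta_b^{k}(P_a)=\Theta_b^{k-1}(\Theta_b(P_a))$ (resp.\ $\Theta_b^{k}(\Lambda)=\Theta_b^{k-1}(\Pi_{\cdot}\Lambda')$), expanding with the iterated Leibniz rule \eqref{E:Th_bComp}, killing terms with $\Theta_b(\Gamma_b)=0$ and $\Theta_b(\widetilde{\Lambda}_{\cdot,\cdot})=0$, and placing one factor in the two-weight norm while the others are bounded in one-weight norms via extrapolation. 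The only cosmetic difference is that the paper groups $\Pi_a(\Pi_b+\Gamma_b)$ together before applying the binomial expansion, whereas you treat each summand of \eqref{E:Th_bPi_a}--\eqref{E:Th_bPi*_a} separately.
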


\begin{proof}
This result with $k = 1$ was proved in Lemma \ref{L:Lambda_ab} for $\Theta_b(P_a)$, and in Section \ref{Ss:Cb2_00} for $\Theta_b(\Lambda)$. We proceed by (strong) induction.  Fix $m \geq 1$  and suppose Proposition \ref{P:Th_b^k(P,Lb)} holds for all $1 \leq k \leq m$. We show that it then holds for $k = m + 1$.

Let us look at the case $P_a = \Pi_a$:
	$$\Theta_b^{m+1}(\Pi_a) = \Theta_b^m \big( \Theta_b(\Pi_a) \big) = \Theta_b^m \big(\Pi_a (\Pi_b + \Gamma_b) \big) + \Theta_b^m(\Lambda_{a,b}).$$ 
Remark that the last term is already controlled by the induction assumption. To analyze the first term, we use the binomial formula:
	\begin{equation} \label{E:BinomComp}
	\Theta_b^m(ST) = \sum_{k = 0}^m \binom{m}{k} \Theta_b^{m-k}(S) \Theta_b^k(T),
	\end{equation}
which follows from \eqref{E:Th_bComp} by a simple induction argument. Then
	\begin{equation} \label{E:Ind1t1} 
	\left\| \Theta_b^m\big( \Pi_a(\Pi_b + \Gamma_b) \big) \right\|_{L^p(\mu;\:\lb)} \leq \sum_{k = 0}^m \binom{m}{k}   
		\left\| \Theta_b^{m-k}(\Pi_a) \right\|_{L^p(\lb)} \left\| \Theta_b^k(\Pi_b + \Gamma_b) \right\|_{L^p(\mu;\:\lb)}.
	\end{equation}
The statement:
	\begin{equation} \label{E:Ind1t3}
	\left\| \Theta_b^{m-k} (\Pi_a) \right\|_{L^2(w)} \lesssim C(m-k) \|a\|_{BMO^2_{\mcd}} \|b\|^{m-k}_{BMO^2_{\mcd}} [w]_{A_2}^{m-k+1} \text{, } 0 \leq k \leq m, 
	\end{equation}
for all $w \in A_2$, follows from the induction assumption on $P_a$ for $0 \leq k \leq m-1$, and from \eqref{E:Paraprod1WtBds} for $k=m$. Then, by Extrapolation,	
	\begin{equation} \label{E:Ind1t4} 
	\left\| \Theta_b^{m-k}(\Pi_a) \right\|_{L^p(\lb)} \lesssim C(\lb, p, m-k) \|a\|_{BMO^2_{\mcd}} \|b\|_{BMO^2_{\mcd}}^{m-k} \text{, } 0\leq k \leq m.
	\end{equation}
On the other hand, noting that $\Theta_b^0(\Pi_b + \Gamma_b) = \Pi_b + \Gamma_b$ and $\Theta_b^k(\Pi_b + \Gamma_b) = \Theta_b^k(\Pi_b)$ for $1 \leq k \leq m$, we have:
	$$ \left\|\Theta_b^k(\Pi_b + \Gamma_b) \right\|_{L^p(\mu;\:\lb)} \lesssim C(\mu, \lb, p, k) \|b\|^k_{BMO^2_{\mcd}} \|b\|_{BMO^2_{\mcd}(\nu)} \text{, } 0 \leq k \leq m,$$	
which follows from Theorem \ref{T:Paraprod2WtBds} for $k = 0$, and from the induction assumption on $P_a$ with $a = b$ for $1 \leq k \leq m$. Similarly:
	$$ \left\| \Theta_b^k(\Pi_b + \Gamma_b) \right\|_{L^2(w)} \lesssim C(k) \|b\|_{BMO^2_{\mcd}}^{k+1} [w]_{A_2}^{k+1} \text{, } 0\leq k \leq m.$$	
From these estimates and \eqref{E:Ind1t1}, we obtain
	$$ \left\| \Theta_b^m\big( \Pi_a(\Pi_b + \Gamma_b) \big) \right\|_{L^p(\mu;\:\lb)} \lesssim C(\mu, \lb, p, m) \|a\|_{BMO^2_{\mcd}} \|b\|^m_{BMO^2_{\mcd}} \|b\|_{BMO^2_{\mcd}(\nu)},$$
and
	$$ \left\| \Theta_b^m\big( \Pi_a(\Pi_b + \Gamma_b) \big) \right\|_{L^2(w)} \lesssim C(m) \|a\|_{BMO^2_{\mcd}} \|b\|^{m+1}_{BMO^2_{\mcd}} [w]_{A_2}^{m+2},$$
which proves the result for $k = m+1$ and $P_a = \Pi_a$. The case $P_a = \Pi^*_a$ follows similarly.

Now suppose $\Lambda = \Lambda_{a,b}$. Then, by \eqref{E:Switch},
	$$ \Theta_b^{m+1}(\Lambda_{a,b}) = \Theta_b^m\big( \Theta_b(\Lambda_{a, b}) \big) = \Theta_b^m(\Pi_a \Lambda_{b,b}).$$
Using the binomial formula again,
	\begin{equation} \label{E:Ind1t2} 
	\left\| \Theta_b^{m+1}(\Lambda_{a,b}) \right\|_{L^p(\mu;\:\lb)} \leq \sum_{k=0}^m \binom{m}{k}
		\left\| \Theta_b^{m-k}(\Pi_a) \right\|_{L^p(\lb)}  \left\| \Theta_b^k(\Lambda_{b,b}) \right\|_{L^p(\mu;\:\lb)}. 
	\end{equation}
The statements:
	$$ \left\| \Theta_b^k(\Lambda_{b,b}) \right\|_{L^p(\mu;\:\lb)} \lesssim C(\mu,\lb,p,k) \|b\|^{k+1}_{BMO^2_{\mcd}} \|b\|_{BMO^2_{\mcd}(\nu)} \text{, } 0 \leq k \leq m, $$
	$$ \left\| \Theta_b^k(\Lambda_{b,b}) \right\|_{L^2(w)} \lesssim C(k) \|b\|^{k+2}_{BMO^2_{\mcd}}[w]^{k+2}_{BMO^2_{\mcd}} \text{, } 0 \leq k \leq m, $$
follow from the induction assumption on $\Lambda$ with $a = b$ for $1 \leq k \leq m$, and from Lemma \ref{L:Lambda_ab} for $k = 0$. Combining these with \eqref{E:Ind1t3} and \eqref{E:Ind1t4}, we have from \eqref{E:Ind1t2}:
	$$ \left\| \Theta_b^{m+1}(\Lambda_{a,b}) \right\|_{L^p(\mu;\:\lb)} \lesssim C(\mu,\lb,p,m+1) \|a\|_{BMO^2_{\mcd}} \|b\|^{m+1}_{BMO^2_{\mcd}} \|b\|_{BMO^2_{\mcd}(\nu)}, $$
	$$ \left\| \Theta_b^{m+1}(\Lambda_{a,b}) \right\|_{L^2(w)} \lesssim C(m+1) \|a\|_{BMO^2_{\mcd}} \|b\|^{m+2}_{BMO^2_{\mcd}} [w]^{m+3}_{A_2}, $$
which proves the result for $k = m+1$ and $\Lambda = \Lambda_{a,b}$. Since the case $\Lambda = \Lambda_{b,a}$ follows similarly, Proposition \ref{P:Th_b^k(P,Lb)} is proved.

\end{proof}

We now have all the tools needed to prove Theorem \ref{T:BIGTHM_Shifts}.

\begin{proof}[Proof of Theorem \ref{T:BIGTHM_Shifts}]
We prove the result for $k = 1$, so we look at
	$$\Theta_b^M\big( C_b^1(\mbs^{ij}) \big) = \Theta_b^M\big( [\mfp_b, \mbs^{ij}] + \Theta_b(\mbs^{ij}) \big),$$
for some integer $M \geq 0$. At this point, we use another easily deduced binomial formula:
	\begin{equation} \label{E:BinomComm}
	\Theta_b^M\big( [S, T] \big) = \sum_{m=0}^M \binom{M}{m} \big[ \Theta_b^{M-m}(S), \Theta_b^m(T) \big].
	\end{equation}
Then
	\begin{align*}
	\left\| \Theta_b^M\big( C_b^1(\mbs^{ij}) \big) \right\|_{L^p(\mu;\:\lb)} \leq & \sum_{m=0}^M 
		\binom{M}{m} \left\| \Theta_b^{M-m}(\mfp_b) \right\|_{L^p(\mu;\:\lb)} 
			\big( \|\Theta_b^m(\mbs^{ij})\|_{L^p(\mu)} +  \|\Theta_b^m(\mbs^{ij})\|_{L^p(\lb)} \big) \\
		& +	\left\| \Theta_b^{M+1}(\mbs^{ij}) \right\|_{L^p(\mu;\:\lb)}.
	\end{align*}
Applying Proposition \ref{P:Th_b^k(P,Lb)} with $a = b$ and Proposition \ref{P:Comm1}, we obtain the result for $k=1$.

Finally, suppose Theorem \ref{T:BIGTHM_Shifts} holds for some $k\geq 1$ and let an integer $M \geq 0$. Then
	$$\Theta_b^M\big( C_b^{k+1}(\mbs^{ij}) \big) = \Theta_b^M \Big( [\mfp_b, C_b^k(\mbs^{ij})] + \Theta_b\big( C_b^k(\mbs^{ij}) \big)\Big),$$
and the result for $k+1$ again follows from Propositions \ref{P:Th_b^k(P,Lb)}  and \ref{P:Comm1}.
\end{proof}


\begin{bibdiv}
\begin{biblist}

\normalsize

\bib{Beznosova}{article}{
	author={Beznosova, O.},
	title={Linear bound for the dyadic paraproduct on weighted Lebesgue space $L_2(w)$},
	journal={Journal of Functional Analysis},
	volume={255},
	number={4},
	date={2008},
	pages={994--1007},
}

\bib{Bloom}{article}{
   author={Bloom, Steven},
   title={A commutator theorem and weighted BMO},
   journal={Trans. Amer. Math. Soc.},
   volume={292},
   date={1985},
   number={1},
   pages={103--122}
}

\bib{Buckley}{article}{
	author={Buckley, S. M.},
	title={Estimates for operator norms and reverse Jensen’s inequalities},
	journal={Trans. Amer. Math. Soc.},
	volume={340},
	date={1993},
	number={1},
	pages={253--272},
}

\bib{Chung}{article}{
	author={Chung, Daewon},
	title={Weighted inequalities for multivariable dyadic paraproducts},
	journal={Publ. Mat.},
	volume={55},
	number={2},
	date={2011},
	pages={475--499},
}

\bib{ChungPereyraPerez}{article}{
	author={Chung, Daewon},
	author={Pereyra, M. Cristina},
	author={Perez, Carlos},
	title={Sharp Bounds for General Commutators On Weighted Lebesgue Spaces},
	journal={Transactions of the American Mathematical Society},
	volume={364},
	number={3},
	date={2012},
	pages={1163--1177},
}

\bib{CRW}{article}{
  author={Coifman, R. R.},
  author={Rochberg, R.},
  author={Weiss, Guido},
  title={Factorization theorems for Hardy spaces in several variables},
  journal={Ann. of Math. (2)},
  volume={103},
  date={1976},
  number={3},
  pages={611--635},
}

\bib{CruzClassicOps}{article}{
	author={Cruz-Uribe, D.},
	author={Moen, K.},
	title={Sharp norm inequalities for commutators of classical operators},
	journal={Publ. Mat.},
	volume={56},
	number={1},
	date={2011},
	pages={147--190},
}

\bib{DalencOu}{article}{
  author={Dalenc, L.},
  author={Ou, Y.},
  title={Upper Bound for Multi-parameter Iterated Commutators},
  pages={1--25},
  eprint={http://arxiv.org/abs/1401.5994},
  year={2014},
}

\bib{Extrapolation}{article}{
	author={Dragi\u{c}evi\'c, O.},
	author={Grafakos, L.},
	author={Pereyra, M. C.},
	author={Petermichl, S.},
	title={Extrapolation and sharp norm estimates for classical operators on weighted Lebegue spaces},
	journal={Publ. Math},
	volume={49},
	date={2005},
	number={1},
	pages={73--91},
}

\bib{HLW1}{article}{
    author={Holmes, Irina},
    author={Lacey, Michael T.},
    author={Wick, Brett D.},
    title={Bloom's Inequality: Commutators in a Two-Weight Setting},
    date={2015},
    eprint={http://arxiv.org/abs/1505.07947}
}

\bib{HLW2}{article}{
	author={Holmes, Irina},
	author={Lacey, Michael T.},
	author={Wick, Brett D.},
  	title={Commutators in the Two-Weight Setting},
  	date={2015},
	eprint={http://arxiv.org/abs/1506.05747},
}

\bib{HytRepOrig}{article}{
  author={Hyt{\"o}nen, T.},
  title={Representation of singular integrals by dyadic operators, and the A\_2 theorem},
  eprint={http://arxiv.org/abs/1108.5119},
  year={2011},
}

\bib{HytRep}{article}{
  author={Hyt{\"o}nen, T.},
  title={The sharp weighted bound for general Calder\'on-Zygmund operators},
  journal={Ann. of Math. (2)},
  volume={175},
  date={2012},
  number={3},
  pages={1473--1506},
}

\bib{HytLacey}{article}{
  author={Hyt{\"o}nen, T. P.},
  author={Lacey, M. T.},
  author={Martikainen, H.},
  author={Orponen, T.},
  author={Reguera, M},
  author={Sawyer, E. T.},
  author={Uriarte-Tuero, I.},
  title={Weak and strong type estimates for maximal truncations of Calder\'on-Zygmund operators on $A_p$ weighted spaces},
  journal={J. Anal. Math.},
  volume={118},
  date={2012},
  number={1},
  pages={177--220},
}

\bib{HytPerezTV}{article}{
  author={Hyt{\"o}nen, T.},
  author={P{\'e}rez, C.},
  author={Treil, S.},
  author={Volberg, A.},
  title={Sharp weighted estimates for dyadic shifts and the A2 conjecture},
  journal={Journal f{\"u}r die reine und angewandte Mathematik},
  volume={2014},
  number={687},
  pages={43--86},
  date={2012},
}

\bib{Lacey}{article}{
  author={Lacey, M. T.},
  title={On the $A_2$ inequality for Calder\'on-Zygmund operators},
  conference={ title={Recent advances in harmonic analysis and applications}, },
  book={ series={Springer Proc. Math. Stat.}, volume={25}, publisher={Springer, New York}, },
  date={2013},
  pages={235--242},
}

\bib{Muck}{article}{
  author={Muckenhoupt, Benjamin},
  title={Weighted norm inequalities for the Hardy maximal function},
  journal={Trans. Amer. Math. Soc.},
  volume={165},
  date={1972},
  pages={207--226}
}

\bib{MuckWheeden}{article}{
  author={Muckenhoupt, B.},
  author={Wheeden, R. L.},
  title={Weighted bounded mean oscillation and the Hilbert transform},
  journal={Studia Math.},
  volume={54},
  date={1975/76},
  number={3},
  pages={221--237},
}

\bib{TreilSharpA2}{article}{
  author={Treil, S.},
  title={Sharp $A_2$ estimates of Haar shifts via Bellman function},
  date={2011},
  pages={1-23},
  eprint={http://arxiv.org/abs/1105.2252},
}

\bib{Wittwer}{article}{
	author={Wittwer, Janine},
	title={A Sharp Estimate on the Norm of the Martingale Transform},
	journal={Mathematical Research Letters},
	volume={7},
	date={2000},
	pages={1--12},
}

\end{biblist}
\end{bibdiv}

\end{document}